\newtheorem{theorem}{Theorem}
\newtheorem{lemma}[theorem]{Lemma}
\newtheorem{corollary}[theorem]{Corollary}
\newtheorem{proposition}[theorem]{Proposition}
\theoremstyle{definition}
\newtheorem{definition}{Definition}
\newtheorem{remark}[theorem]{Remark}
\def\Kon{\ensuremath{K_{on}}}
\def\Koff{\ensuremath{K_{off}}}
\def\Kfb{\ensuremath{K_{fb}}}
\def\floorxt{\ensuremath{\lfloor tx \rfloor + 1}}
\def\to{\rightarrow}
\def\To{\longrightarrow}
\newcommand{\BV}{\operatorname{BV}}
\newcommand{\weakstar}{\ensuremath{\xrightharpoonup{*}}}
\newcommand{\N}{\ensuremath{\mathbb{N}}}   
\newcommand{\Ctj}{\ensuremath{C^{t_{j}}}}
\newcommand{\piphi}{\ensuremath{\int_{(k-1)t_{j}^{-1}}^{kt_j^{-1}} \varphi(s, x) \text{ d}x }}
\newcommand{\Sobolev}{\ensuremath{W^{2,2}((1 + x)^2 \text{ d}x, (0, \infty))}}
\author{Lorena Pohl\thanks{Universität Bonn, Germany, \texttt{pohl@iam.uni-bonn.de}} \hspace*{3 cm} Barbara Niethammer\thanks{Universität Bonn, Germany, \texttt{niethammer@iam.uni-bonn.de}}}
\title{A Becker-D\"{o}ring type model for cell polarization}
\date{August 1st, 2023}
\begin{document}
	\maketitle

{\small \textbf{Abstract}
We propose a model for cell polarization based on the Becker-D\"{o}ring equations with the first coagulation coefficient equal to zero. We show convergence to equilibrium for power-law coagulation and fragmentation rates and obtain a loss of mass in the limit $t \to \infty$ depending on the initial mass and the relative strengths of the coagulation and fragmentation processes. In the case of linear rates, we further show that large clusters evolve in a self-similar manner at large times by comparing limits of appropriately rescaled solutions in different spaces. 

\textbf{Acknowledgments}
The authors gratefully acknowledge the financial support of the Deutsche Forschungsgemeinschaft (DFG, German Research Foundation) through the collaborative research
centre The mathematics of emerging effects (CRC 1060, Project-ID 211504053) and the Bonn International Graduate School of Mathematics at the Hausdorff Center for Mathematics (EXC 2047/1, Project-ID 390685813).

This version of the article has been accepted for publication after peer review but is not the Version of Record and does not reflect post-acceptance improvements or any corrections. The Version of Record is available online at \url{https://doi.org/10.1007/s10955-023-03144-0}

\textbf{Keywords} \textit{Becker-D\"{o}ring equations}, \textit{self-similarity}, \textit{long-time behaviour}}

\section{Introduction}\label{sec1}
In this paper we analyse the long-time behaviour of a variant of the classical Becker-D\"{o}ring model which is motivated by models for cell polarization. 
Cell polarization is a biological process in which a signalling molecule accumulates on a discrete part of a cell's membrane. While the area of localization is often predetermined by pre-existing spatial cues, many cell types can spontaneously establish cell polarity in random orientations \cite{wedlich-soldner-2004}.

For example, in the budding yeast \textit{Saccharomyces cerevisiae}, proteins governed by the protein Cdc42 form a cap on the membrane, thereby determining the site of a new bud. While the mechanisms driving this process are complex and not completely understood, it has been shown that Cdc42-GTP recruits its activator to the membrane, causing a positive feedback loop  \cite{kluender2013,martin2014}.

In  \cite{altschuler2008}, the authors propose two models for cell polarization relying on such a feedback loop, one deterministic and one stochastic. The deterministic model describes a finite number of particles distributed in the cytosol and on the membrane of a cell and moving between the two according to three processes: A molecule in the cytosol can spontaneously move to a random position on the membrane with the rate $\Kon$, a molecule on the membrane can switch back to the cytosol with the rate $\Koff$, and a molecule in the cytosol can be recruited by a feedback loop to a region of the membrane where the concentration of particles is already high with a rate proportional to $\Kfb$ and the fraction of total molecules in the cytosol. Additionally, particles on the membrane are subject to diffusion. For experimental estimation of the rates of these processes, see for instance \cite{wedlich-soldner2007}.

The authors of \cite{altschuler2008} find that solutions to the deterministic model cannot maintain a polarized state, and if the initial state of the system is nearly spatially homogeneous, a polarized state will not appear. This leads the authors to consider a stochastic model instead, for which they find different behaviour depending on the relative strengths of the feedback mechanism and the spontaneous transition of particles from the cytosol to the membrane. They find that when $\frac{\Kon}{\Kfb}$ is very small compared to the number of signalling molecules, a single distinct polarized region appears, whereas no localization takes place when $\frac{\Kon}{\Kfb}$ is large.

Here we propose a different deterministic model for cell polarization which allows for a polarized state to appear independently of the spatial homogeneity of the initial state. We are inspired by existing models for coagulation-fragmentation processes to describe the accumulation of particles at a certain site by the formation of a cluster. We denote the concentration of clusters of size $n \in \N$ as $c_n(t)$ and the concentration of single molecules or monomers in the cytosol as $f(t)$. Cluster formation and fragmentation are driven by the same processes as in \cite{altschuler2008}: a feedback loop by which clusters recruit molecules from the cytosol and spontaneous movement of monomers between the cytosol and the membrane. 
This leads to considering the system
\begin{align}\label{main-equations}
\begin{split}
\partial_t c_n &= J_{n-1} - J_n, \quad n \geq 1,\\
\partial_t f &= - \sum_{n = 0}^{\infty} J_n,
\end{split}
\end{align}
where the fluxes $J_n$ are defined as
\begin{align}\label{def_fluxes_j_n-a}
\begin{split}
J_n &= K_{fb} a_n f c_n - K_{off} b_{n+1} c_{n + 1}, \quad n \geq 1 \\
J_0 &= K_{on} f  - K_{off} c_1,
\end{split}
\end{align}
We fix the initial mass of the system to be
\begin{equation}\label{rho-is-initial-mass}
\rho \coloneqq f(0) + \sum_{n = 1}^\infty nc_n(0) < \infty.
\end{equation} 

In this model, the emergence of cell polarity can be described by the formation of  super-clusters of extremely large size as $t \to \infty$. This occurs when some or all of the mass is transported into clusters of larger and larger size as time goes on, so that the total system loses mass in the limit $t \to \infty$.

Since the authors of \cite{altschuler2008} find that polarization occurs in their model only when $\frac{\Kon}{\Kfb}$ is very small we suppose in the following that $K_{on}=0$ and combine the remaining two coefficients in a new constant $\kappa = \frac{\Kfb}{\Koff}$. 
This leads to 
\begin{align}\label{def_fluxes_j_n}
\begin{split}
J_n &=  \kappa a_n f c_n -  b_{n+1} c_{n + 1}\,, \quad n \geq 1 \\
J_0 &=  - c_1\,.
\end{split}
\end{align}

We remark that while we consider this model in the context of cell polarization, further investigation regarding connections to problems in other areas such as protein aggregation or vesicular transport in neurons would be interesting, as our model has similarities to those considered e.g. in \cite{prigent_2012, xue_2008, Bressloff_2016}.
Similarly, it would be interesting to consider a variation of our model including spatial diffusion of clusters on the membrane.

System \eqref{main-equations}+\eqref{def_fluxes_j_n} is a variation of the Becker-D\"{o}ring equations, which were first treated in depth in \cite{ball-carr-penrose}.  The Becker-D\"{o}ring equations describe the dynamics of clusters of particles where clusters can only gain or lose a single particle at a time. The equations are given by
\begin{align}
\partial_t c_n &= J_{n-1} - J_n, \quad n \geq 2 \label{becker-doring-eqns1} \\
\partial_t c_1 &= -J_1 -  \sum_{n = 1}^{\infty} J_n, \label{becker-doring-eqns2}
\end{align}
with fluxes
$J_n = a_n c_1 c_n - b_{n+1}c_{n+1},$
where $a_n > 0$ are the coagulation and $b_n > 0$ the fragmentation coefficients. 

Well-posedness of \eqref{main-equations}+\eqref{def_fluxes_j_n} and the fact that solutions are strictly positive for all times assuming $c_{n}(0) > 0$ for at least one $n \in \N$ can be shown analogously to the standard Becker-D\"{o}ring equations, see \cite{ball-carr-penrose, laurencot-mischler}. We state the result here for completeness.

\begin{proposition}\label{P.existence}
	Suppose that $0<a_n,b_n=O(n)$ as $n\to \infty$ and that we have nonnegative initial data $f(0)$ and $(c_n(0))_{n\in\N}$ such that
	$\rho:=f(0)+\sum_{n=1}^{\infty} n c_n(0)< \infty$. Then there exists a global nonnegative solution to \eqref{main-equations}, \eqref{def_fluxes_j_n}  that conserves the mass, i.e. it holds
	$\rho=f(t)+ \sum_{n=1}^{\infty} nc_n(t)$ for all $t \in [0,\infty)$.
\end{proposition}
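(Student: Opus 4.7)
The plan is to follow the classical truncation-and-compactness strategy of Ball--Carr--Penrose \cite{ball-carr-penrose}, as adapted to general coagulation--fragmentation models in \cite{laurencot-mischler}, and only highlight where the modified flux $J_0 = -c_1$ and the assumption $a_n,b_n = O(n)$ enter. First I would set $J_n^N \equiv 0$ for $n\ge N$ and consider the resulting finite ODE system for $(f^N, c_1^N,\dots,c_N^N)$. Since the right-hand side is polynomial, Picard--Lindelöf yields a unique local solution. Nonnegativity follows because each negative contribution to $\partial_t c_n^N$ has $c_n^N$ as a factor and the only negative contribution to $\partial_t f^N$ has $f^N$ as a factor, so the nonnegative orthant is forward-invariant. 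A telescoping calculation shows that the truncated system exactly conserves $f^N(t) + \sum_{n=1}^N n c_n^N(t) = \rho$, which in particular precludes blow-up and yields a global solution.

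Next, I would extract a limit as $N\to\infty$. The mass bound gives $0 \le c_n^N(t) \le \rho/n$ and $0\le f^N(t)\le \rho$ uniformly in $N$ and $t$. The growth hypothesis $a_n,b_n=O(n)$ then implies that $|\partial_t c_n^N|$ and $|\partial_t f^N|$ are bounded uniformly in $N$ on any compact time interval, so Arzelà--Ascoli combined with a diagonal extraction over $n\in\N$ provides a subsequence converging locally uniformly in $t$, componentwise in $n$, to some nonnegative $(f,c_n)$. Passing to the limit in the Duhamel form of each equation (for fixed $n$ only finitely many terms are involved) yields a solution of \eqref{main-equations}, \eqref{def_fluxes_j_n}, and Fatou's lemma gives the one-sided bound $f(t) + \sum_{n=1}^\infty n c_n(t) \le \rho$.

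The main obstacle is to upgrade this inequality to equality, i.e.\ to rule out escape of mass to infinity in the limit. The standard tool is de la Vallée-Poussin: since $\sum_{n=1}^\infty n c_n(0) < \infty$, there exists a convex nondecreasing $\Phi:[0,\infty)\to[0,\infty)$ with $\Phi(n)/n\to\infty$ and $\sum \Phi(n) c_n(0) < \infty$. Using the linear bound $a_n,b_n \le Cn$ together with the elementary inequality $(n+1)\Phi(n) \le n\Phi(n+1)$ (valid for $\Phi$ convex with $\Phi(0)=0$ and after a harmless modification), a discrete integration-by-parts on $\sum_{n=1}^N \Phi(n) c_n^N$ produces terms controllable by $\rho$ and $\sum \Phi(n)c_n^N$ itself, so Gronwall yields a bound uniform in $N$ and locally uniform in $t$. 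Passing to the limit, $\sum_{n=1}^\infty \Phi(n) c_n(t)$ is locally bounded in $t$, which provides the uniform integrability of $(n c_n(t))$ needed to exchange limit and sum, and hence mass conservation. The hard point here is the Gronwall closure at the level of the $\Phi$-moment; the remainder of the argument (extending the solution globally and checking that it is a mass-conserving solution in the sense stated) is routine.
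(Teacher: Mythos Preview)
The paper does not actually give its own proof of this proposition; it merely states the result ``for completeness'' and refers the reader to the classical Becker--D\"oring well-posedness arguments in \cite{ball-carr-penrose, laurencot-mischler}. Your proposal is precisely that truncation--compactness--de la Vall\'ee-Poussin scheme, with the correct observations about where $J_0=-c_1$ and the linear growth bound $a_n,b_n=O(n)$ enter, so it is exactly the approach the paper invokes.
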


We remark that the assumption $a_n,b_n=O(n)$ is not too restrictive for our purposes, since we do not expect superlinear growth and fragmentation rates in the biological application. An interesting feature of the Becker-D\"{o}ring equations lies in their long-time behaviour, which depends on whether or not the initial mass of the system exceeds a critical density that depends on the coefficients $a_n$ and $b_n$.  Setting $\partial_t c_n = 0$ in \eqref{becker-doring-eqns1} and \eqref{becker-doring-eqns2} implies that $J_n = 0$ for all $n \geq 1$. Therefore, the equilibrium solutions have the form $c_n = Q_nc_1^n$, where $Q_1 = 1$ and
\begin{equation}\label{bd-equilibria-shape}
Q_n = \frac{\prod_{i = 1}^{n-1} a_i}{\prod_{i = 2}^{n} b_i}
\end{equation}
for $n \geq 2$. Hence, the equilibrium solutions are fully characterized by their value for $c_1$, and their mass is given by
$\sum_{n = 1}^{\infty} nQ_nc_1^n.$
The radius of convergence of this sum is given by \linebreak
$z \coloneqq \big(\limsup\limits_{n \to \infty} Q_n^{\frac{1}{n}} \big)^{-1} \in [0, \infty) \cup \{\infty \}$
and the corresponding critical mass by $\rho_z = \sum_{n = 1}^{\infty} nQ_nz^n \in [0, \infty) \cup \{\infty \}.$
It can be shown that for initial masses $\rho \leq \rho_z < \infty$, respectively $\rho < \infty$ when $\rho_z = \infty$, solutions converge to the unique equilibrium with mass $\rho$ as $t \to \infty$ . However, when the initial mass of the system exceeds the critical mass $\rho_z$, there is no equilibrium state with mass $\rho$. In this case, it has been shown that $c_n(t) \to Q_n z^n$ as $t \to \infty$ for all $n \geq 1$, meaning that the solution converges weakly to the equilibrium with the critical mass $\rho_z$, and the excess mass $\rho - \rho_z$ is transported to larger and larger clusters as $t \to \infty$, disappearing in the limit \cite{ball-carr-penrose, Slemrod_1989, carr-dunwell}.

In contrast to the standard Becker-D\"{o}ring model, our model includes two types of monomers, those on the membrane and those in the cytosol. Furthermore, we do not allow monomers in the cytosol to spontaneously pass onto the membrane. This can be compared to setting the coagulation rate $a_1 = 0$ in the regular Becker-D\"{o}ring equations, effectively breaking the chain of coagulation. In the existing literature on the Becker-D\"{o}ring equations, all coagulation and fragmentation rates are generally assumed to be strictly positive, and the existing methods used to establish the long-time behaviour of the system are no longer applicable to our model since they rely on the shape of equilibria given by \eqref{bd-equilibria-shape} and the existence of a Lyapunov-functional.
However, equilibria of the system \eqref{main-equations}+\eqref{def_fluxes_j_n} have the form $c_n = 0$ for all $n \geq 1$. The only mass is in $f$, which means that equilibria of any finite mass exist. This begs the question of whether or not mass ever vanishes in the limit $t \to \infty$ like in the regular Becker-D\"{o}ring equations. We answer this question for power-law coefficients $a_n = b_n = n^{\lambda}$ for $\lambda \in [0,1]$.
\begin{proposition}\label{P.longtime}
	Let $a_n=b_n=n^{\lambda}$ for some $\lambda \in [0,1]$ and assume $f(0) < \rho$. 
	\begin{enumerate}
		\item \label{P.longtimea}
		Then the solution of \eqref{main-equations}+\eqref{def_fluxes_j_n}satisfies $c_n(t) \to 0$ as  $t \to \infty$ for all $n \geq 1$. 
		\item \label{P.longtimeb}
		Suppose  that $\sum_{n=1}^{\infty} n^{2-\lambda } c_n(0)<\infty$. Then 
		$f(t)\to \min (\rho,\frac{1}{\kappa})$ as $ t \to \infty$.
	\end{enumerate}
\end{proposition}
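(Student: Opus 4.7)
I would start from the identity
\[
\partial_t f \;=\; (1 - \kappa f)\,M_\lambda, \qquad M_\lambda(t) := \sum_{n\ge 1} n^\lambda c_n(t),
\]
obtained from $\partial_t f = -\sum_{n\ge 0} J_n$ by reindexing the fragmentation part of $\sum_{n\ge 1} J_n$ so that the $+c_1$ it produces cancels the $-c_1$ coming from $J_0$. This ODE exposes $f = 1/\kappa$ as a critical value and, combined with the mass bound $f \leq \rho$, singles out $\min(\rho, 1/\kappa)$ as the only possible limit for (ii).

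\textbf{Part (i).} For $N(t) := \sum_{n \geq 1} c_n(t)$, the telescoping identity $\sum_{n \geq 1}(J_{n-1}-J_n) = J_0 - \lim_{n\to\infty} J_n = -c_1$ (valid since $J_n \to 0$ as $n\to\infty$, a consequence of $\sum n c_n \leq \rho$ which forces $n^\lambda c_n \to 0$ for $\lambda \leq 1$) gives $\partial_t N = -c_1 \leq 0$, so $c_1 \in L^1(0,\infty)$. The a priori bound $c_n \leq \rho/n$ makes $\partial_t c_1 = -c_1 - \kappa f c_1 + 2^\lambda c_2$ uniformly bounded, and Barbalat's lemma then gives $c_1(t)\to 0$. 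I then induct on $n$: from $\partial_t\!\sum_{k \geq n}c_k = J_{n-1} = \kappa(n-1)^\lambda f c_{n-1} - n^\lambda c_n$, integrating and using $c_{n-1} \in L^1$ together with nonnegativity of the tail gives $c_n \in L^1$, and a second application of Barbalat gives $c_n(t)\to 0$.

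\textbf{Preparation for (ii).} The Cauchy--Schwarz inequality with weights $n^{\lambda/2}$ and $n^{(2-\lambda)/2}$ gives the interpolation
\[
M_1^2 \;\leq\; M_\lambda \cdot M_{2-\lambda}, \qquad M_{2-\lambda}(t) := \sum_{n\ge 1} n^{2-\lambda} c_n(t),
\]
so a lower bound on $M_\lambda$ requires an upper bound on $M_{2-\lambda}$. Expanding $\partial_t M_{2-\lambda}$ using $(n+1)^{2-\lambda} - n^{2-\lambda} = O(n^{1-\lambda})$, the arithmetic identity $\lambda + (2-\lambda) - 1 = 1$ collapses the leading coagulation and fragmentation terms to $M_1 \leq \rho$ (and the subleading ones to $N \leq N(0)$); this yields $|\partial_t M_{2-\lambda}| \leq C(\rho,\kappa,\lambda)$, hence $M_{2-\lambda}(t) \leq M_{2-\lambda}(0) + Ct$, where the initial-moment hypothesis is what guarantees the finite seed. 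Consequently, any uniform lower bound $M_1 \geq \delta > 0$ forces $\int_0^\infty M_\lambda\,dt = \infty$.

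\textbf{Case analysis for (ii).} If $\rho \leq 1/\kappa$, the key identity makes $f$ non-decreasing, so $f(t) \to f^* \leq \rho$; assuming $f^* < \rho$ gives $M_1 \geq \rho - f^* > 0$, so $\int M_\lambda = \infty$, while $\int_0^\infty (1-\kappa f) M_\lambda = f^* - f(0) < \infty$ together with $1 - \kappa f \geq 1 - \kappa f^* > 0$ (since $f^* < \rho \leq 1/\kappa$) yields $\int M_\lambda < \infty$, a contradiction; hence $f \to \rho$. If $\rho > 1/\kappa$, I use the Lyapunov $V(t) := (f(t)-1/\kappa)^2$, which satisfies $\partial_t V = -4\kappa V M_\lambda$, so $V(t) = V(0)\exp\bigl(-4\kappa \int_0^t M_\lambda\bigr)$; if $V_\infty := \lim V > 0$, then $|f - 1/\kappa|$ is non-increasing and strictly positive, so $f$ stays on one side of $1/\kappa$, giving $f \leq \max(f(0), 1/\kappa) < \rho$ and hence $M_1 \geq \min(\rho - f(0), \rho - 1/\kappa) > 0$; the interpolation then forces $\int M_\lambda = \infty$ and $V \to 0$, contradicting $V_\infty > 0$, so $f \to 1/\kappa$. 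The main technical obstacle is the moment-propagation bound on $M_{2-\lambda}$: the initial-moment hypothesis $\sum n^{2-\lambda} c_n(0) < \infty$ is precisely what seeds it, and the exponent $2-\lambda$ is dictated by the Cauchy--Schwarz interpolation against $M_1 \leq \rho$.
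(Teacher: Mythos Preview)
Your argument is correct, and Part (ii) is essentially the paper's proof: the same Cauchy--Schwarz interpolation $M_1^2 \le M_\lambda M_{2-\lambda}$, the same linear-in-time bound on $M_{2-\lambda}$ (using $(n+1)^{2-\lambda}-n^{2-\lambda}=O(n^{1-\lambda})$ and mass conservation), and the same contradiction via $\int M_\lambda = \infty$. Your packaging with the Lyapunov function $V=(f-1/\kappa)^2$ is equivalent to the paper's explicit formula $f(t)=\tfrac{1}{\kappa}+\tfrac{\kappa f(0)-1}{\kappa}e^{-\kappa\int_0^t M_\lambda}$; note only that $\partial_t V = -2\kappa V M_\lambda$, not $-4\kappa$.

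Part (i), however, is genuinely different from the paper. The paper splits into two cases: for $f(0)\le 1/\kappa$ it constructs a supersolution via the pure-diffusion system $\partial_t\tilde c_n=\tilde J_{n-1}-\tilde J_n$ and a tail comparison principle, then shows $\tilde c_n\to 0$ by an energy estimate on $\sum n^\lambda \tilde c_n^2$; for $f(0)\ge 1/\kappa$ it introduces the monotone bounded quantities $q_k=\sum_{n>k}(n-k)c_n$ and reads off $c_k\to 0$ from $\partial_t q_k\to 0$. Your route avoids both devices: from $\partial_t N=-c_1$ you get $c_1\in L^1$, Barbalat gives $c_1\to 0$, and then the tail identity $\partial_t\sum_{k\ge n}c_k=J_{n-1}$ propagates $c_{n-1}\in L^1 \Rightarrow c_n\in L^1$ inductively, with Barbalat closing each step. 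This is more elementary and unified (no case distinction on $f(0)$, no auxiliary system), at the cost of not producing the quantitative decay information the paper's comparison yields; the paper's construction, on the other hand, is what later feeds into the self-similarity analysis in Section~3.
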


In particular, this result implies that there is indeed loss of mass in the limit $t\to \infty$ if $\rho$ is larger than a critical mass, which in our model is $\rho_{crit}=\frac{1}{\kappa}$. The proof of Proposition \ref{P.longtime} is the content of Section \ref{sec-longtime-an=bn=nlambda}.
The assumption $f(0) < \rho$, which in our application implies that some mass is initially present on the membrane, is made to exclude the trivial case $f(0) = \rho = f(t)$ for all $t \geq 0$.

\begin{remark}
	The behaviour in Prop. \ref{P.longtime} differs from the case where $\Kon > 0$, which is much more closely related to the classical Becker-D\"{o}ring equations and can be treated by the same methods. Indeed if we had $J_0 = \Kon f - c_1$ in \eqref{def_fluxes_j_n}, equilibria of the system would have the shape $c_n = Q_n \Kon \kappa^{n-1}f^n$ and mass
	$\rho = f + \Kon \sum_{n = 1}^\infty nQ_n \kappa^{n-1}f^n$.
	Long-time behaviour of the system would be as described above for the regular Becker-D\"{o}ring system. In particular, for $a_n = b_n = n^{\lambda}, \lambda \in [0,1]$, the critical monomer density $f$ is still $\frac{1}{\kappa}$, but the corresponding critical mass is infinity, meaning that equilibria of any finite mass exist and the system converges as $t \to \infty$ to the unique equilibrium with the initial mass. 
	
	We see that for given mass $\rho$ and very small $\Kon$ the equilibrium value of $f$ must be close to $\frac{1}{\kappa}$ and most of the mass sits in the larger clusters. Hence, one might expect that for a finite time horizon the system for $\Kon$ positive but small evolves similarly to the one for $\Kon=0$ even though eventually in the long-time limit the mass escapes to infinity if $\Kon=0$ and hence the solutions are not close anymore.
	
	In the case of $\lambda = 1$, it is possible to explicitly compute the value of $f$ in the limit since the equation for $\partial_t f$ is an ODE, which is not the case in the typical Becker-D\"{o}ring equations, and one can check that this limit value is indeed the one for which the equilibrium solution has the correct mass.
	
	Finally, let us remark that is possible to construct physically reasonable coagulation and fragmentation coefficients such that a loss of mass takes place also for the $\Kon > 0$ system analogously to the classical Becker-D\"{o}ring equations, however, these are generally more complex to treat for $\Kon = 0$.
\end{remark}

Another important question for the classical Becker-D\"{o}ring equations
is how the excess mass contained in the large clusters evolves for large times. It has been shown in \cite{Penrose97,Vel98,laurencot-mischler,niethammer2003} that the evolution of these large clusters can be described, for typical coefficients in certain phase transition processes,  by the classical LSW mean-field model for coarsening. For the latter solutions typically evolve in a self-similar fashion, but it has so far been out of reach to prove  full self-similarity of the evolution of the large clusters in the Becker-D\"{o}ring equations.

It is the main goal of this paper to establish such a self-similarity result for system \eqref{main-equations}+ \eqref{def_fluxes_j_n} with coefficients $a_n=b_n=n$. More precisely, we show  in Section \ref{sec-selfsim} that the appropriately rescaled tails $C_k(t):=\sum_{n=k}^{\infty}c_n(t)$ converge to a self-similar profile as $t \to \infty$. 

\begin{theorem}\label{T.main}
	Suppose that $a_n=b_n=n$. 
	Then it holds that 
	\begin{equation}\label{main1}
	tC_{\lfloor t x \rfloor + 1}(t) \weakstar  \Big(\rho - \frac{1}{\kappa}\Big)e^{-x} \qquad \mbox{ in } L^\infty([0, \infty)) \mbox{ as } t \to \infty\,.
	\end{equation}
	Furthermore
	let $0 < a < T < \infty$. For all $1 \leq p < \infty$ and $M < \infty$ it holds that
	\begin{equation}\label{main2}  tC_
	{\lfloor t x \rfloor + 1}(ts) \to \Big( \rho - \frac{1}{\kappa}\Big) \frac{1}{s}e^{-\frac{x}{s}}
	\end{equation}
	in $C([a,T], L^p((0,M)))$ as $t \to \infty$.
\end{theorem}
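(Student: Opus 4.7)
My approach is to pass to a Laplace-transformed quantity for the tails $C_k$ and identify its limit via characteristics, using the exponential rate $f(\tau)\to 1/\kappa$ as the principal quantitative input. For $a_n=b_n=n$, inserting the fluxes into $\partial_\tau f = -\sum_n J_n$ and telescoping with the mass identity $\sum_n n c_n = \rho - f$ produces the closed logistic ODE $\partial_\tau f = (\rho - f)(1-\kappa f)$. In the nontrivial regime $\rho > 1/\kappa$ (the case $\rho \le 1/\kappa$ being handled separately by a direct tail bound) this gives $|1-\kappa f(\tau)| \lesssim e^{-(\kappa\rho-1)\tau}$, refining Proposition \ref{P.longtime}.

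Summing the equation for $c_n$ over $n\ge k$ gives $\partial_\tau C_k = \kappa(k-1)f(C_{k-1}-C_k) + k(C_{k+1}-C_k)$, and a direct computation yields
\[
\partial_\tau L = \bigl(\kappa f(\tau) e^{-\sigma}-1\bigr)\bigl[(e^\sigma-1)\partial_\sigma L + e^\sigma L\bigr]
\]
for $L(\sigma,\tau):=\sum_{k\ge1} C_k(\tau) e^{-\sigma k}$. In rescaled variables $\tilde L^t(s,\eta):=L(\eta/t,ts)$, Taylor-expansion of $e^{\pm\eta/t}$ together with the exponential decay of $\kappa f-1$ reduces this to $\partial_s\tilde L^t + \eta^2\partial_\eta\tilde L^t + \eta\tilde L^t = R^t$, with $R^t\to 0$ locally uniformly on $\{s>0\}\times[0,\infty)$. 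The candidate limit $\tilde L_\infty(s,\eta) = (\rho-1/\kappa)/(1+s\eta)$ is precisely the Laplace transform in $x$ of $(\rho-1/\kappa)s^{-1}e^{-x/s}$. I will establish $\tilde L^t\to\tilde L_\infty$ by combining the uniform bound $0\le\tilde L^t\le\tilde L^t(s,0)=\rho-f(ts)\le\rho$ (yielding $C_{\mathrm{loc}}$-precompactness after extraction), the boundary datum $\tilde L^t(s,0)\to\rho-1/\kappa$ for $s>0$ (from Proposition \ref{P.longtime}), passage to the limit in the PDE so that any limit point solves $\partial_s\tilde L + \eta^2\partial_\eta\tilde L + \eta\tilde L = 0$ with the right boundary value on $\{\eta=0\}$, and uniqueness via integration along characteristics $\{1/\eta+s=\mathrm{const}\}$.

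With the Laplace convergence in hand, the uniform $L^1$-bound $\int_0^\infty tC_{\lfloor tx\rfloor+1}(ts)\,dx = \rho-f(ts)\le\rho$ together with the pointwise bound $tC_{\lfloor tx\rfloor+1}(ts)\le tC_1(ts)\le C(s)$ (the latter from a moment estimate $\sum_n c_n(\tau) = O(1/\tau)$, consistent with the self-similar scaling) yields weak-$*$ convergence in $L^\infty([0,\infty))$ of $tC_{\lfloor tx\rfloor+1}(ts)$ to $(\rho-1/\kappa)s^{-1}e^{-x/s}$ for every $s>0$; this gives \eqref{main1} at $s=1$. To upgrade to the strong statement \eqref{main2} I exploit monotonicity and equicontinuity: $x\mapsto tC_{\lfloor tx\rfloor+1}(ts)$ is nonincreasing, so the pointwise bound above provides a uniform $\BV$-bound on $(0,M)$, and Helly's theorem combined with the weak convergence yields strong $L^p$ convergence at each fixed $s$. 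Differentiating in $s$ at fixed $x$, $\partial_s[tC_{\lfloor tx\rfloor+1}(ts)] = t^2 J_{k-1}(ts)$ with $k=\lfloor tx\rfloor+1$, which is uniformly bounded in $L^p((0,M))$ for $s\in[a,T]$ via the exponential decay of $\kappa f-1$; Arzelà--Ascoli then promotes this to convergence in $C([a,T];L^p((0,M)))$.

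The main obstacle is the identification step. The characteristics $\eta(s) = \eta_0/(1-(s-s_0)\eta_0)$ of the limit PDE never reach $\{\eta=0\}$ at any finite $s_0\ge 0$ (they only approach it as $s_0\to -\infty$), so uniqueness cannot be read off directly from the boundary datum by a pure characteristic argument. Moreover, the naive initial data $\tilde L^t(0,\eta)\to\rho-f(0)$ do not match the expected value $\tilde L_\infty(0^+,\eta)=\rho-1/\kappa$: this reflects a boundary layer of width $O(1/t)$ at $s=0$ during which $f$ equilibrates. The resolution is to work on $[s_0,T]$ for small $s_0>0$, control the characteristics on cutoff regions $\{\eta\ge\eta_*\}$, and then combine with the uniform boundary data at $\eta=0$ (plus the uniform bound on $\tilde L^t$) to conclude, sending $s_0,\eta_*\to 0$ at the end.
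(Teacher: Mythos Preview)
Your Laplace-transform route is genuinely different from the paper's, and the derivation of the limiting equation $\partial_s\tilde L+\eta^2\partial_\eta\tilde L+\eta\tilde L=0$ with $\tilde L(s,0^+)=\rho-\tfrac1\kappa$ is correct. But the two steps you yourself flag as delicate are in fact genuine gaps.

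\textbf{Identification.} Along the characteristics $\xi:=\eta^{-1}+s=\mathrm{const}$ the quantity $\eta\,\tilde L$ is conserved, so every solution is of the form $\tilde L(s,\eta)=\eta^{-1}H(\eta^{-1}+s)$ for an arbitrary $H$. The boundary condition at $\eta=0$ only fixes the \emph{asymptotics} $H(\xi)\xi\to\rho-\tfrac1\kappa$ as $\xi\to\infty$, and the uniform bound $0\le\tilde L\le\rho$ only gives $0\le H(\xi)\le\rho/\xi$. Neither determines $H$ on bounded intervals: for any $0<\varepsilon\le\tfrac1\kappa$ the function
\[
\tilde L(s,\eta)=\frac{(\rho-\tfrac1\kappa)+\varepsilon\,e^{-(\eta^{-1}+s)}}{1+s\eta}
\]
is a bounded solution with the same boundary value at $\eta=0$. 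Since every backward characteristic from $(s,\eta)$ with $s>0$ lands at $\eta_0=\eta/(1+s\eta)>0$ as $s_0\downarrow 0$, no cutoff-and-limit procedure in $s_0,\eta_*$ can connect the interior to the boundary datum; what is really missing is the value of the limit at $s\to 0^+$, and that is precisely what the boundary layer hides. The paper avoids this by working in physical space with test functions $\varphi$ built from the adjoint fundamental solution, so that distributional uniqueness is automatic, and by showing that the inhomogeneity $G$ and the initial data together concentrate at the origin with combined weight $(\rho-\tfrac1\kappa)\delta_0$.

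\textbf{Time equicontinuity.} The identity $\partial_s\big[tC_{\lfloor tx\rfloor+1}(ts)\big]=t^2 J_{k-1}(ts)$ is fine, but $J_{k-1}=LC_k+G_k$ and only $G_k$ is controlled by the decay of $\kappa f-1$. The diffusion part $t^2 LC_k(ts)$ contains terms of size $t^2 k\,c_k$ with $k\sim tx$, and already $\int_0^M t^2 k\,c_k\,dx=t\sum_{k\le Mt}kc_k(ts)\le t\rho$ diverges, so no uniform $L^p((0,M))$ bound on $\partial_s C^{t}$ is available. The paper gets equicontinuity only in the dual of a weighted Sobolev space $W=W^{2,2}((1+x)^2\,dx)$: throwing the symmetric operator $L$ onto the test function by summation by parts produces a bound $|\langle\partial_s C^{t}(s,\cdot),u\rangle|\lesssim(a^{-1/2}+a^{-2})(\|u\|_W+\|u\|_W^2)$, and the Aubin--Lions step is then run with the triple $\BV((0,M))\hookrightarrow L^p((0,M))\hookrightarrow W^*$ rather than with $L^p$ bounds on the time derivative.
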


In particular, this suggests that the typical size of a large cluster on the membrane grows approximately at rate $t$, a fact that might also be of interest for biological applications. It would be interesting to also examine the speed of convergence towards the self-similar profile, however, this is outside the scope of this paper. 

We expect that a comparable self-similarity result should also hold for the power-law coefficients with $\lambda \in [0,1)$. However, the proof of 
Theorem \ref{T.main} relies on the fact that we have in this case an explicit ODE for $f$ which implies exponentially fast convergence to the equilibrium value $\frac{1}{\kappa}$. This allows us to adapt a strategy of 
\cite{eichenberg-schlichting} where self-similar long-time behaviour is proved for a model of exchange-driven growth. 

Finally, let us remark that for the power-law coefficients considered here we do not expect to have metastable long-time behaviour if $\rho-\frac{1}{\kappa}$ is small. This is clear for $\lambda =1$ since then we obtain exponentially fast decay of $f$ towards its equilibrium value (even though the rate becomes small if $\rho-\frac{1}{\kappa}$ is small), but also for $\lambda<1$ one would not expect such a phenomenon. For non-power law coefficients one might obtain metastability, however, just as in the case of the classical BD equations.

Throughout the paper, we use $A \lesssim B$ to denote that $A \leq DB$ for a constant $D$ that is independent of time.

\section{Proof of Proposition \ref{P.longtime}}\label{sec-longtime-an=bn=nlambda}

We assume that $f(0)<\rho$, since otherwise no dynamics take place. 
Our strategy for showing the convergence of $f(t)$ and $c_n(t)$ will depend on the value of $f(0)$. We find that 
\begin{align*}
\partial_t f &= - \sum_{n = 0}^{\infty} J_n = c_1 - \sum_{n = 1}^{\infty} \kappa n^\lambda f c_n + \sum_{n = 2}^{\infty} n^\lambda c_n 
\\&= (1 - \kappa f) \sum_{n = 1}^{\infty} n^\lambda c_n 
\begin{cases}
>0, & \text{if } f \in [0, \frac{1}{\kappa }), \\
=0, & \text{if } f = \frac{1}{\kappa }, \\
<0, & \text{if } f \in (\frac{1}{\kappa}, \rho),
\end{cases} 
\end{align*}
hence $f$ is monotonously increasing if $f(0) < \frac{1}{\kappa}$ and decreasing if $f(0) > \frac{1}{\kappa}$.
We begin by examining the long-time behaviour of clusters on the membrane. 

\subsection{Monomers in the cytosol: proof of Prop. \ref{P.longtime}, part \ref{P.longtimea}}

{\bf Case I:   $f(0) \leq \frac{1}{\kappa}.$} We have seen that then $f(t) \leq \frac{1}{\kappa}$ for all $t \geq 0$, so transport to larger clusters should be slower than in the case of pure diffusion. Thus we expect to find a supersolution by considering
\begin{align}\label{pure-diffusion-system}
\partial_t \tilde{c}_n &= \tilde{J}_{n-1} - \tilde{J}_n\,, \qquad \mbox{ where } \quad \tilde{J}_n= n^\lambda \tilde{c}_n - (n+1)^\lambda \tilde{c}_{n + 1}\,, \quad n \geq 1\,,
\end{align}
with initial data $\tilde{c}_n(0) = c_n(0)$ and fixed $\tilde{c}_0 \equiv 0$. Note that we use the convention $0^0 = 1$. 

We will show that $\sum_{n = k}^{\infty} c_n(t) \leq \sum_{n = k}^{\infty} \tilde{c}_n(t)$ for all $k$ and that $\sum_{n=1}^{\infty} \tilde c_n(t) \to 0$ as $t \to \infty$, from which the desired result follows.
Indeed, if we define 
$C_k = \sum_{n = k}^{\infty} c_n,$
$\tilde{C}_k = \sum_{n = k}^{\infty} \tilde{c}_n$
and $E_k = C_k - \tilde{C}_k$
then 	\begin{align*}
\partial_t C_k &= \sum_{n = k}^{\infty} \partial_t c_n 
= \sum_{n = k}^{\infty} \left(J_{n-1} - J_n\right) = J_{k-1},
\end{align*}
and similarly, 
$\partial_t \tilde{C}_k = \tilde{J}_{k-1}.$
Since $f(0) \leq \frac{1}{\kappa}$ implies that $\kappa f(t) - 1 \leq 0$ for all $t \geq 0$, we have for $k \geq 2$ that
\begin{align*}
\partial_t E_k &= J_{k-1} - \tilde{J}_{k-1} \\
&= (\kappa f - 1)(k-1)^\lambda c_{k-1} +  (k-1)^\lambda (c_{k-1} - \tilde{c}_{k-1}) + k^\lambda(\tilde{c}_{k} - c_{k}) \\
&\leq (k-1)^\lambda (c_{k-1} - \tilde{c}_{k-1}) + k^\lambda(\tilde{c}_{k} - c_{k}) \\
&=(k-1)^\lambda(E_{k-1} - E_{k}) + k^{\lambda} (E_{k+1} - E_{k}).
\end{align*}
For $k = 1$ we have
$$\partial_t E_1 = -c_1 + \tilde{c}_1 = -E_1 + E_2.$$ 
Since $C_k \to 0$ and $\tilde{C}_k \to 0$ uniformly on $[0, T]$ as $k \to \infty$, we can conclude by standard comparison arguments that $E_k \leq 0$ for all $k$.  (See also for example \cite[Prop. 2.8]{canizo-uniform-moments}.)

We remark that conservation of mass also holds for the system (\ref{pure-diffusion-system}).
Therefore, from \linebreak $\tilde{c}_n (0) = c_n(0)$ we obtain that $\tilde{c}_n (t) \leq \frac{1}{n}$. Furthermore, as before,
the series $\sum_{n = 1}^\infty n \tilde{c}_n(t)$ converges uniformly on compact intervals $[0, T]$.
With this in mind, we observe that $\sum_{n = 1}^\infty n^\lambda \frac{\tilde{c}_n^2}{2}$ is differentiable with

\begin{align*}
\partial_t \sum_{n = 1}^\infty n^\lambda \frac{\tilde{c}_n^2}{2} & = \sum_{n = 1}^\infty n^\lambda \tilde{c}_n \big(\tilde{J}_{n-1} - \tilde{J}_n \big) 
= \sum_{n = 0}^\infty (n + 1)^\lambda \tilde{c}_{n + 1} \tilde{J}_n - \sum_{n = 1}^\infty n^\lambda \tilde{c}_n \tilde{J}_n \\
&= - \sum_{n = 0}^\infty (n^\lambda \tilde{c}_n - (n+1)^\lambda \tilde{c}_{n+1}) \tilde{J}_n = - \sum_{n = 0}^\infty \big \vert \tilde{J}_n \big\vert ^2.
\end{align*}

Furthermore it holds 
\begin{equation*}
\partial_t \sum_{n = 0}^{\infty} \frac{1}{2} \big\vert\tilde{J}_n \big\vert^2 =  - \sum_{n = 0}^{\infty} (n+1)^\lambda \big( \tilde{J}_{n+1} - \tilde{J}_n \big)^2 \leq 0,
\end{equation*}
implying convergence of $\sum_{n = 0}^\infty \big\vert \tilde{J}_n(t)  \big\vert^2$ to some nonnegative constant as $t \to \infty$. Since
\begin{equation*}
\sum_{n = 1}^\infty n^\lambda \frac{\tilde{c}_n(t)^2}{2} 
+ \int_{0}^{t} \sum_{n = 0}^\infty \left\vert \tilde{J}_n(s) \right\vert ^2 \text{ d}s 
= \sum_{n = 1}^\infty n^\lambda \frac{\tilde{c}_n(0)^2}{2} < \infty
\end{equation*}
for $t \geq 0$,
it follows that 
$\sum_{n = 0}^\infty \big\vert \tilde{J}_n  \big\vert^2 \to 0$
as $t \to \infty$. From this and
$\tilde{J}_0 = -\tilde{c}_1$ we conclude $\tilde{c}_1(t) \to 0$ as $t \to \infty$, and
we obtain by induction that $\tilde{c}_n(t) \to 0$ for all $n \geq 1$, and by the comparision principle it follows that $c_n(t) \to 0$ for all $n \geq 1$.

{\bf Case II:  $f(0) \geq \frac{1}{\kappa}.$}
We make use of an idea from \cite{carr-dunwell}.
We consider the functions \linebreak
$q_k(t) = \sum_{n = k + 1}^\infty (n - k)c_n(t)$
for $k \geq 1$ and observe that
$q_k(t) \leq \sum_{n = k + 1}^\infty nc_n(t) \leq \rho,$
and 
\begin{align*}
\partial_t q_k &= \sum_{n = k + 1}^\infty (n-k)\left(J_{n-1} - J_n \right) 
= \sum_{n = k}^\infty (n + 1)J_n - \sum_{n = k + 1}^\infty n J_n - k\sum_{n = k + 1}^\infty \left(J_{n-1} - J_n \right).
\end{align*}
Since $\lim\limits_{n \to \infty} \left\vert J_n \right\vert \to 0$ uniformly on compact time intervals, it holds that $\sum_{n = k + 1}^\infty \left(J_{n-1} - J_n \right) = J_k,$
allowing us to further rewrite 
\begin{align*}
\partial_t q_k  &= \sum_{n = k}^\infty J_n 
= \kappa f\sum_{n = k}^\infty n^\lambda c_n - \sum_{n = k}^\infty (n+1)^\lambda c_{n+1}
= (\kappa f - 1) \sum_{n = k}^\infty n^\lambda c_n + k^\lambda c_k \geq 0,
\end{align*}
where we used that the assumption $f(0) \geq \frac{1}{\kappa }$ implies that $\kappa f(t) - 1 \geq 0$ for all $t \geq 0$. 
Thus each function $q_k$ grows in time. But since the $q_k$ are bounded from above, it must hold that $\partial_t q_k(t) \to 0$ as $t \to \infty$, which implies $k^\lambda c_k(t) \to 0$ as $t \to \infty$ for all $k \geq 1$.

In total, we have shown that for any $\lambda \in [0,1]$ and $f(0) \in [0,\rho]$ we have $c_n(t) \to 0$ as $t \to \infty$ for all $n \geq 1$.

\subsection{Clusters on the membrane: proof of Prop. \ref{P.longtime}, part \ref{P.longtimeb}}\label{subsec-monomers}
It remains to examine the long-time behaviour of $f$, the concentration of monomers in the cytosol. We immediately observe that if $f(0) = \frac{1}{\kappa}$, then $f(t) = \frac{1}{\kappa}$ for all $t \geq 0$, and when $f(0) = \rho$ it holds that $f(t) = \rho$ for all $t \geq 0$. For initial data other than these we consider two cases.

{\bf Case I: $\lambda = 1.$}
In this case, the equation for $f$ becomes
$$\partial_t f =  (1- \kappa f)(\rho-f)$$
and we find by standard ODE arguments that $f(t) \to \min \{\rho, \frac{1}{\kappa} \}$ exponentially fast.

\bigskip
{\bf Case II:  $\lambda < 1.$}
The equation for $f$ now takes the form
$$\partial_t f = (1-\kappa f)\sum_{n = 1}^\infty n^{\lambda}c_n.$$

We first consider the case that $\rho \leq \frac{1}{\kappa}$.
Then since $f$ is monotonically increasing and bounded from above by $\rho$, it must converge to some $\bar{f}$ as $t \to \infty$. Suppose that $\bar{f} < \rho$. For all $t \geq 0$ we have
$(\rho - f(t))^{2} \geq (\rho - \bar{f})^{2} > 0$
and by Hölder's inequality it holds that
\begin{equation}\label{longtime-rho-leq-half-hoelder}
\big(\rho - f(t) \big)^{2} = \Big(\sum_{n = 1}^\infty nc_n(t) \Big)^2 \leq \sum_{n = 1}^\infty n^\lambda c_n(t) \sum_{n = 1}^\infty n^{2 - \lambda}c_n(t).
\end{equation}

We notice that $(n+1)^{2 - \lambda} - n^{2-\lambda} \leq d(\lambda)  n^{1 - \lambda}$ for some constant $d(\lambda)$,
such that 
we find that by dropping negative terms and using mass conservation, we can formally bound (and rigorously by approximation argument)
\begin{align*}
\partial_t \sum_{n = 1}^{\infty} n^{2-\lambda}c_n(t) 
&= \sum_{n = 1}^{\infty} n^{2-\lambda}\big(J_{n-1} -J_n \big)  = \sum_{n = 0}^{\infty} (n+1)^{2-\lambda}J_n -\sum_{n = 1}^{\infty} n^{2-\lambda}J_n \\
&= -c_1 + \sum_{n = 1}^{\infty}\left(\kappa fn^\lambda c_n - (n+1)^\lambda c_{n+1} \right) \big((n+1)^{2 - \lambda} -n^{2 - \lambda} \big) \\
&\leq  \kappa f\sum_{n = 1}^{\infty} n^\lambda c_n \big((n+1)^{2 - \lambda} -n^{2 - \lambda} \big)  \\
&\leq d(\lambda) \kappa \rho \sum_{n = 1}^{\infty} n^\lambda c_n n^{1 - \lambda} \leq \kappa d(\lambda)\rho^{2}
\end{align*}
and hence 	$\sum_{n = 1}^\infty n^{2-\lambda}c_n(t) \leq \sum_{n = 1}^\infty n^{2-\lambda}c_n(0) + \kappa \rho^{2}  d(\lambda) t.$
Combining this with \eqref{longtime-rho-leq-half-hoelder} yields
$$\sum_{n = 1}^\infty n^\lambda c_n(t) \gtrsim \left(\rho -  \bar{f} \right)^{2} \frac{1}{1+t}.$$
This is not integrable, so
$$f(t) = \frac{1}{\kappa} + \frac{\kappa f(0) - 1}{\kappa}e^{-\kappa \int_{0}^{t} \sum_{n = 1}^\infty n^\lambda c_n(s) \text{ d}s} \to \frac{1}{\kappa} \neq \bar{f} $$
as $t \to \infty$, a contradiction. Hence it must be that $\bar{f} = \rho$.

A similar approach also works in the case $\rho > \frac{1}{\kappa}$. If $f(0) < \frac{1}{\kappa}$ it holds  $\rho - f \geq \rho - \frac{1}{\kappa}$ for all times, such that $\sum_{n = 1}^\infty n^\lambda c_n(t) \gtrsim \big(\rho -  \tfrac{1}{\kappa} \big)^{2} \frac{1}{1+t}.$
On the other hand, if $f(0) > \frac{1}{\kappa}$ we have  $\big(\rho - f \big)^{2} \geq \left(\rho - f(0) \right)^{2}$. In both cases it follows that
$$f(t) = \frac{1}{\kappa} + \frac{\kappa f(0) - 1}{\kappa}e^{-\kappa \int_{0}^{t} \sum_{n = 1}^\infty n^\lambda c_n(s) \text{ d}s} \to \frac{1}{\kappa} \quad \mbox{ as } t \to \infty.$$

We remark that in the case $\rho > \frac{1}{\kappa}$ and $f(0) > \frac{1}{\kappa}$, we can conclude without the additional assumption  $\sum_{n = 1}^{\infty} n^{2 - \lambda}c_n(0) < \infty$ by bounding $\sum_{n=1}^{\infty} c_n(t)$ from 
below by the solution of the pure diffusion problem (\ref{pure-diffusion-system}). One can then use a result from \cite{eichenberg-schlichting}   that characterizes the time decay of $\sum_{n=1}^{\infty} \tilde c_n(t)$.
We omit the details here.

\section{Proof of Theorem \ref{T.main}}\label{sec-selfsim}

In this section we prove the main result of this paper, that is the self-similar long-time behaviour for $a_n = b_n = n$ and $\rho>\frac{1}{\kappa}$. 
Our results regard the tails $C_k(t) = \sum_{n = k}^\infty c_n(t)$ for $k \geq 0$, where we set $c_0(t) \equiv 0$. We obtain
\begin{align*}
\partial_t C_k(t) &= \sum_{n = k}^\infty \partial_t c_n(t) = \sum_{n = k}^\infty (J_{n - 1} - J_n) = J_{k - 1} 
= \kappa f(k-1)c_{k-1} - kc_k \\
&= (k-1)\left(C_{k-1} - C_k \right) - k \left(C_k - C_{k+1}  \right) + (\kappa f - 1)(k-1)(C_{k-1} - C_k)
\end{align*}
for $k \geq 2$ and 
\begin{align*}
\partial_t C_1 &= J_0 = -(C_1 - C_2),
\end{align*}
for $k = 1$. The tails satisfy the discrete Neumann boundary condition
$C_1(t) - C_0(t) = 0$ for all $t \geq 0$.
We denote $\partial^+ C_k \coloneqq C_{k+1} - C_k$, $\partial^- C_k \coloneqq C_k - C_{k-1}$ and 
$$LC_k(t) = \partial^- \left(k \partial^+ C_k(t) \right) = (k-1)\left(C_{k-1}(t) - C_k(t) \right) - k \left(C_k(t) - C_{k+1}(t) \right).$$
We further denote
$$G_k(t) = (\kappa f(t) - 1)(k-1)c_{k-1},$$
such that the expression for $\partial_t C_k(t)$ becomes
\begin{equation}\label{selfsim-discrete-eqn-for-tails-simple-notation}
\partial_t C_k(t) = LC_k(t) + G_k(t).
\end{equation}

Taking the viewpoint that $G$ is an inhomogeneity and in light of the fact that $G_k(t) \to 0$ exponentially as $t \to \infty$, it seems intuitive that the solutions of \eqref{selfsim-discrete-eqn-for-tails-simple-notation} might behave for large times like the solutions of the homogeneous system
\begin{equation}\label{selfsim-discrete-pure-diffusion-eqn-for-tails}
\partial_t \tilde{C}_k(t) = L\tilde{C}_k(t)
\end{equation}
with boundary condition $k\partial^+C_k(t)\vert_{\{ k = 0\}} = 0$ for all $t \geq 0$.
This system can be viewed as a spatially discrete version of the system
\begin{equation}\label{selfsim-cont-pure-diffusion-eqn-for-tails}
\left\{ 
\begin{split}
&\partial_s \bar{C}(s,x) = \partial_x \left(x \partial_x \bar{C} \right), &&(s,x) \in (0, T) \times (0, \infty) \\
&\left(x\partial_x \bar{C} \right)\vert_{x = 0} = 0,  &&t \geq 0 \\
&\bar{C}(0, x) = C_0(x), &&x \geq 0
\end{split}
\right.
\end{equation}
for which dimensional analysis suggests the scaling $x \sim t$. Since equation \eqref{selfsim-cont-pure-diffusion-eqn-for-tails} conserves the zeroth moment of $\bar{C}(t,x)$, we make the Ansatz
$\bar{C}(t,x) = \frac{1}{t}h(\frac{x}{t})$. Inserting this into \eqref{selfsim-cont-pure-diffusion-eqn-for-tails} and using the boundary condition $\partial_x \bar{C}(t,x)\vert_{\{x = 0 \}} = 0$ for all $t \geq 0$ yields that $h(z) = de^{-z}$ for some constant $d$. 

Since
$\sum_{k=1}^\infty C_k(t) =  \sum_{n = 1}^\infty nc_n(t),$
we expect that the scaling solution should have as its zeroth moment the mass $\rho - \frac{1}{\kappa}$, which leads to $d = \rho - \frac{1}{\kappa}$. Therefore we arrive at
$$\bar{C}(t,x) = \frac{(\rho - \frac{1}{\kappa})}{t}e^{-\frac{x}{t}},$$
and we expect for $t \to \infty$ that
$$tC_{\lfloor tx \rfloor + 1}(t) \sim t\bar{C}(t, tx) = \Big(\rho - \frac{1}{\kappa}\Big)e^{-x}.$$

The key tool in the rigorous analysis will be to compare the rescaled tails to solutions of the spatially continuous homogeneous equation \eqref{selfsim-cont-pure-diffusion-eqn-for-tails}. Our approach is inspired by and makes use of the results in  \cite{eichenberg-schlichting}, who show a self-similarity result for solutions of the homogeneous equation \eqref{selfsim-discrete-pure-diffusion-eqn-for-tails}. Our situation is made more complicated by the presence of the inhomogeneity $G_k(t)$, since estimates for the fundamental solution of \eqref{selfsim-discrete-pure-diffusion-eqn-for-tails} as derived in \cite{eichenberg-schlichting} are degenerate at $t = 0$. However, expressing $C_k(t)$ in terms of this fundamental solution involves a convolution in time and space. Thus estimates for the fundamental solution do not translate to $C_k(t)$ due to their non-integrability. In particular, this prohibits us from establishing equicontinuity of suitable rescaled functions in time and space from the estimates on the fundamental solution. 

Our strategy of proof will be as follows: Given any sequence $t_j \to \infty$, we consider the functions $t_j C_{\lfloor t_j \cdot \rfloor + 1}(t_j)$ as a sequence of functions indexed by $t_j$. Since $t_j$ now takes the role of an index, we add a new time variable $s$ and denote
$$\Ctj(s, x) \coloneqq t_jC_{\lfloor t_j \cdot \rfloor + 1}(t_js).$$
We show that this sequence converges in the weak* sense in $L^\infty((a, T) \times (0, \infty))$ for any $0 < a < T$ and consists of approximate weak solutions of (\ref{selfsim-cont-pure-diffusion-eqn-for-tails}).
This will allow us to uniquely determine the weak* limit in  $L^\infty((a, T) \times (0, \infty))$. However, weak* convergence in $L^\infty((a, T) \times (0, \infty))$ does not necessarily hold for $s = 1$. To obtain a result for $s = 1$, we establish equicontinuity of the $\Ctj$ in the new time $s$ in a suitable weak sense, which will allow us to construct via the Arzel\`{a}-Ascoli theorem another limit of the $\Ctj$, and comparing limits allows us to identify it as $(\rho - \frac{1}{\kappa}) e^{-x}$.

\subsection{Weak* convergence in $L^\infty$}

In this section we prove equation \eqref{main1} in Theorem \ref{T.main}.
We first collect some estimates to use later.
\begin{lemma}\label{selfsim-estimates-lemma}
	\begin{enumerate}[(i)]
		\item $\sum_{k=1}^\infty G(t,k) \lesssim e^{-dt}$ for a constant $d > 0$ depending on $f(0)$.
		\item The unique solution $\phi(t,k,l)$ of (\ref{selfsim-discrete-pure-diffusion-eqn-for-tails}) with boundary condition $k\partial^+\phi(t, k, l)\vert_{\{ k = 0\}} = 0$ and initial condition
		$\phi(0, k, k) = 1$, $\phi(0, k, l) = 0$ for $k \neq l$ satisfies $\sup_{k \geq 1} \vert \phi(t,k,l) \vert \lesssim \frac{1}{1+t}.$
		\item For all $t \geq 0$ the tails $C_k(t)$ satisfy 
		\begin{equation}\label{selfsim-tail-decay}
		\sum_{k=1}^\infty C_k(t) \leq \rho, \quad \sup_{k \geq 1} \vert C_k(t) \vert \lesssim \frac{1}{1+t}.
		\end{equation}
		\item For all $s \in [0, \infty)$ the rescaled tails $\Ctj$ satisfy 
		\begin{equation}\label{selfsim-Ctj_L1}
		\int_{0}^{\infty} \Ctj(s,x) \text{ d}x \leq \rho
		\end{equation}
		and for all $s \in (0, \infty)$
		\begin{equation}\label{selfsim-Ctj_Linfty}
		\lVert \Ctj(s, \cdot) \rVert_{L^{\infty}((0, \infty))} \lesssim \frac{1}{s}.
		\end{equation}
		\item For any two measures $U \in \mathcal{M}(\N)$ and $\mu \in \mathcal{M}([0, \infty))$ it holds that
		\begin{equation}\label{selfsim-e&s-adjoint}
		\int_{[0, \infty)} tU(\floorxt) \text{ d}\mu(x) = t \sum_{k=1}^\infty U(k) \mu\Big(\Big[\frac{k-1}{t}, \frac{k}{t}\Big)\Big).
		\end{equation}
	\end{enumerate}
\end{lemma}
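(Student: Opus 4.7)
The plan is to dispatch (v), (i), (ii) quickly, tackle (iii) as the main analytic step via Duhamel's formula, and then obtain (iv) by a change of variable. Part (v) is a direct Fubini-type identity: since $\lfloor tx\rfloor + 1 = k$ precisely when $x \in [(k-1)/t, k/t)$, one writes $U(\lfloor tx\rfloor + 1) = \sum_{k\geq 1} U(k) \mathbf{1}_{[(k-1)/t, k/t)}(x)$, substitutes into the integral, and swaps sum and integral. Part (ii) is exactly the fundamental-solution estimate for the operator $L$ proved in \cite{eichenberg-schlichting} for the homogeneous equation (which coincides with \eqref{selfsim-discrete-pure-diffusion-eqn-for-tails}), so I would cite it rather than reprove it.

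For part (i), I would use that since $\lambda = 1$ and $\rho > 1/\kappa$, the equation for $f$ reduces to the explicit ODE $\partial_t f = (1 - \kappa f)(\rho - f)$ from Case I of Section \ref{sec-longtime-an=bn=nlambda}, whose equilibrium $1/\kappa$ is hyperbolically stable; standard ODE estimates give $|\kappa f(t) - 1| \lesssim e^{-dt}$ for some $d > 0$ depending on $f(0),\rho,\kappa$. Summing the definition of $G$, $\sum_{k\geq 1} G_k(t) = (\kappa f(t) - 1)\sum_{n\geq 0} n c_n(t) = (\kappa f(t) - 1)(\rho - f(t))$; since all $G_k(s)$ share the sign of $\kappa f(s) - 1$, mass conservation yields $\bigl|\sum_k G_k(t)\bigr| \leq \rho|\kappa f(t) - 1| \lesssim e^{-dt}$.

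Part (iii) is the analytic heart of the lemma. The $L^1$ bound $\sum_k C_k(t) = \rho - f(t) \leq \rho$ is immediate from mass conservation. For the pointwise bound I apply Duhamel's principle to $\partial_t C_k = LC_k + G_k$ using the fundamental solution $\phi$ from (ii):
\begin{equation*}
C_k(t) = \sum_l \phi(t,k,l) C_l(0) + \int_0^t \sum_l \phi(t-s,k,l) G_l(s)\,ds.
\end{equation*}
The first term is bounded by $\frac{c}{1+t}\sum_l C_l(0) \leq \frac{c\rho}{1+t}$ using (ii) uniformly in $l$ and the mass bound. For the Duhamel integral, (ii) together with the shared-sign observation and part (i) yields $\bigl|\sum_l \phi(t-s,k,l)G_l(s)\bigr| \lesssim e^{-ds}/(1+t-s)$; splitting the $s$-integral at $t/2$ and using $1/(1+t-s) \leq 2/(1+t)$ on $[0, t/2]$ together with $e^{-ds} \leq e^{-dt/2}$ on $[t/2, t]$ produces an $O(1/(1+t))$ bound. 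Part (iv) then follows from (iii) by the substitution $y = t_j x$: the $L^1$ integral becomes $\sum_k C_k(t_j s) \leq \rho$, and $\|\Ctj(s, \cdot)\|_\infty = t_j \sup_k C_k(t_j s) \lesssim t_j/(1 + t_j s) \leq 1/s$.

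I expect the main obstacle to be the rigorous justification of Duhamel in part (iii): since $L$ has coefficients growing linearly in $k$ and the heat-kernel bound (ii) is not obviously summable over $l$ with a constant uniform in $k$, some truncation-in-$k$ and a limiting argument are needed to legitimize the representation formula and the interchange of sum and integral. The $\ell^1$ mass bound on $C$ and the exponential decay of $G$ from (i) should provide the uniform tail control required to pass to the limit.
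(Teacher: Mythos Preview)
Your proposal is correct and follows essentially the same approach as the paper: (i) from the explicit ODE for $f$ in Section~\ref{subsec-monomers}, (ii) cited from \cite{eichenberg-schlichting}, (iii) via the Duhamel representation \eqref{self-sim-formula-C(t,k)-via-Phi} combined with (i) and (ii), (iv) from (iii) by rescaling, and (v) from the definition. You supply more detail than the paper does on the Duhamel integral (the $t/2$ splitting) and correctly flag the summability/justification issue, which the paper also leaves implicit.
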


\begin{proof}
	Part $(i)$ follows from Section \ref{subsec-monomers}, while $(ii)$ is proved in 
	\cite[Prop. 2.4, Cor. 2.8, Lemma 2.14]{eichenberg-schlichting}.
	The first estimate in $(iii)$ is a consequence of $\sum_{n = 1}^\infty nc_n(t) \leq \rho$. As for the second, viewing $G_k(t)$ as an inhomogeneity, we can express the solutions $C_k(t)$ as
	\begin{equation}\label{self-sim-formula-C(t,k)-via-Phi}
	C_k(t) = \sum_{l = 1}^{\infty}\phi(t,k,l)C_{l}(0) + \int_{0}^{t} \sum_{l = 1}^\infty \phi(t-s,k,l)G_l(s) \text{ d}s.
	\end{equation}
	The bound then follows from $(i)$ and $(ii)$.
	Part $(iv)$ is a consequence of the first estimate in $(iii)$ and $(v)$ just follows from the definition.
\end{proof}

Next, we define an appropriate notion of weak solutions of \eqref{selfsim-cont-pure-diffusion-eqn-for-tails} as in \cite[Def. 3.4]{eichenberg-schlichting}. We use a particular class of test functions chosen such that uniqueness of weak solutions as distributions follows immediately. They are defined via the adjoint equation
\begin{equation}\label{selfsim-continuous-pure-diffusion-eqn-adjoint}
\left\{ 
\begin{split}
&\partial_s \varphi - \partial_x \left(x \partial_x \varphi \right) = g &&(s,x) \in (0, \infty) \times (0, \infty) \\
&\left(x\partial_x \varphi \right)\vert_{x = 0} = 0,  &&t \geq 0 \\
&\varphi(0, x) = 0, &&x \geq 0.
\end{split}
\right.
\end{equation}
Let $\psi(s,x,y)$ denote the fundamental solution of (\ref{selfsim-cont-pure-diffusion-eqn-for-tails}) derived in \cite[Section 3.1]{eichenberg-schlichting}. Then for any $g \in C_c^\infty((0, T) \times (0, \infty))$ the function
$$\xi(s,x) = \int_{0}^{s} \int_{0}^{\infty} \psi(s-r, x, y)g(r,y) \text{ d}y \text{ d}r$$
solves (\ref{selfsim-continuous-pure-diffusion-eqn-adjoint}).
As test functions we will use for given $T > 0$
\begin{equation}\label{selfsim-continuous-pure-diffusion-eqn-test-fcts}
\varphi(s,x) = \xi(T - s, x)g(x) = \int_{0}^{T - s} \int_{0}^{\infty} \psi(T -s-r, x, y)g(r,y) \text{ d}y \text{ d}r
\end{equation}
for $g \in C_c^\infty((0, T) \times (0, \infty))$. 
Following \cite{eichenberg-schlichting}, we define weak solutions as follows.
\begin{definition}\label{selfsim-def-weak-sol}
	A weak solution of (\ref{selfsim-cont-pure-diffusion-eqn-for-tails}) with initial data $\mu_0 \in \mathcal{M}([0, \infty))$ is a family \linebreak $\left\{ \mu_s \right\}_{s \in (0,T] } \subset \mathcal{M}([0, \infty))$ such that for all functions $\varphi$ of the form (\ref{selfsim-continuous-pure-diffusion-eqn-test-fcts}) it holds that
	\begin{equation}\label{selfsim-continuous-pure-diffusion-weak}
	\int_{0}^{T} \int_{[0, \infty)} \partial_s \varphi + \partial_x \left(x \partial_x \varphi \right) \text{d}\mu_s(x) \text{ d}s = - \int_{[0, \infty)} \varphi(0, x) \text{ d}\mu_0(x).
	\end{equation}
\end{definition}
\begin{remark}
	Weak solutions are unique in the sense of distributions, which can be seen by rewriting the above formulation to
	\begin{equation*}
	\int_{0}^{T} \int_{[0, \infty)} g(T-s,x) \text{ d}\mu_s(x) \text{ d}s = - \int_{[0, \infty)} \int_{0}^{T} \int_{0}^{\infty} \psi(T-r, x, y)g(r, y) \text{ d}y \text{ d}r \text{ d}\mu_0(x),
	\end{equation*}
	for $g \in C_c^\infty((0, T) \times (0, \infty))$. This will later allow us to identify the limit of $\Ctj$ as $j \to \infty$.
\end{remark}
The fact that test functions $\varphi$ are bounded and continuous on $[0,T] \times [0, \infty)$ and differentiable on $(0, T) \times (0, \infty)$ follows from the formula \eqref{selfsim-continuous-pure-diffusion-eqn-test-fcts} and the properties of $\psi$ shown in \cite[Prop. 3.2]{eichenberg-schlichting}. Furthermore, the authors show that
\begin{equation}\label{selfsim-varphi-est-1}
\sup_{x \in (0, \infty)}  \left\vert \partial_x \left( x  \partial_x  \varphi(s, x) \right)  \right\vert
\leq T \sup_{x \in (0, \infty), s \geq 0} \left\vert \partial_x \left( x  \partial_x  g(s, x) \right) \right\vert
< \infty
\end{equation}
for all $s \geq 0$, 
\begin{equation}\label{selfsim-varphi-est-2}
\sup_{x \in (0, \infty)} \vert\partial_x \varphi(s, x)\vert 
\leq \sup_{x \in (0, \infty)}  \partial_x \left( x  \partial_x  \varphi(s, x) \right), 
\end{equation}
and
\begin{equation}\label{selfsim-varphi-est-3}
\sup_{x \in (0, \infty)} \vert\partial_x^2 \varphi(s, x)\vert 
\leq 2x^{-1}\sup_{x \in (0, \infty)}  \partial_x \left( x  \partial_x  \varphi(s, x) \right), 
\end{equation}
see  \cite[Cor. 3.3, Lemma 3.9]{eichenberg-schlichting}, which imply
\begin{equation}\label{selfsim-varphi-est-4}
\sup_{x \in (0, \infty)} \vert x\partial_x^3 \varphi(s, x)\vert 
\lesssim x^{-1} + x^{-2}.
\end{equation}
With these estimates we are now ready for our first step, which is to determine the weak* limit of $\Ctj$.
\begin{theorem}\label{thm-weak*-limit_sx}
	There exists a subsequence of $t_j$, which we again denote by $t_j$, such that $$\Ctj(s,x) \weakstar \frac{(\rho - \frac{1}{\kappa})}{s}e^{-\frac{x}{s}}$$ in $L^\infty([a, \infty) \times (0, \infty) )$ as $j \to \infty$ for all $a > 0$.
\end{theorem}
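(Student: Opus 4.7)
The plan is to extract a weak* convergent subsequence of $\Ctj$ using the uniform bounds from Lemma \ref{selfsim-estimates-lemma}(iv), identify any subsequential limit as a weak solution of \eqref{selfsim-cont-pure-diffusion-eqn-for-tails} in the sense of Definition \ref{selfsim-def-weak-sol} with Dirac initial data of mass $\rho - \tfrac{1}{\kappa}$ concentrated at the origin, and invoke the uniqueness observation made after Definition \ref{selfsim-def-weak-sol} to force $\mu(s,x) = \tfrac{\rho - 1/\kappa}{s}e^{-x/s}$. Since the limit is independent of the subsequence, the whole family then converges weak* in $L^\infty([a,\infty)\times(0,\infty))$.

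For the compactness step, the bound \eqref{selfsim-Ctj_Linfty} yields $\lVert\Ctj\rVert_{L^\infty([a,T]\times(0,M))}\lesssim 1/a$ for every $0<a<T<\infty$ and $M<\infty$, and Banach--Alaoglu applied with predual $L^1$, together with a diagonal exhaustion in $a$, $T$, $M$, produces a subsequence (still denoted $t_j$) with $\Ctj \weakstar \mu \in L^\infty_{\mathrm{loc}}((0,\infty)\times[0,\infty))$. To identify $\mu$, I would test the discrete equation \eqref{selfsim-discrete-eqn-for-tails-simple-notation} against a function $\varphi$ of the form \eqref{selfsim-continuous-pure-diffusion-eqn-test-fcts} (with $\varphi(T,\cdot)=0$), use \eqref{selfsim-e&s-adjoint} together with summation by parts in $k$, and rearrange the identity as
\begin{equation*}
\int_0^T\!\!\int_0^\infty \Ctj\bigl[\partial_s\varphi + \partial_x(x\partial_x\varphi)\bigr]\,dx\,ds + \mathcal{E}_j + \mathcal{I}_j = -\int_0^\infty \Ctj(0,x)\varphi(0,x)\,dx,
\end{equation*}
where $\mathcal{E}_j$ is the Taylor remainder comparing the discrete adjoint operator (at mesh $1/t_j$) to $\partial_x(x\partial_x)$, and $\mathcal{I}_j$ collects the contribution of the inhomogeneity $G_k(t_j s)$.

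Each of the three extra terms is then sent to a limit. The remainder $\mathcal{E}_j$ is controlled by a second-order Taylor expansion together with the pointwise bounds \eqref{selfsim-varphi-est-1}--\eqref{selfsim-varphi-est-4} on $\varphi$ and the $L^1$ bound \eqref{selfsim-Ctj_L1}, giving $\lvert \mathcal{E}_j \rvert \lesssim 1/t_j$ after a small splitting near $x=0$ exploiting \eqref{selfsim-varphi-est-4}. The initial-data term is handled by \eqref{selfsim-e&s-adjoint} and smoothness of $\varphi(0,\cdot)$: since $\sum_k C_k(0) = \rho - f(0)$, one obtains $\int_0^\infty \Ctj(0,x)\varphi(0,x)\,dx \to (\rho - f(0))\varphi(0,0)$. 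For $\mathcal{I}_j$, the change of variable $u = t_j s$ converts it into $\int_0^{t_j T}\sum_k G_k(u)\varphi(u/t_j,(k-1)/t_j)\,du$, and the mass-balance identity $\sum_k G_k(u) = -\partial_u f(u)$ (a direct consequence of $\partial_t f = (1-\kappa f)(\rho - f)$ for $\lambda=1$) combined with the exponential decay from Lemma \ref{selfsim-estimates-lemma}(i) and dominated convergence gives $\mathcal{I}_j \to (f(0) - \tfrac{1}{\kappa})\varphi(0,0)$. Adding the three contributions produces exactly the weak formulation with initial datum $(\rho - \tfrac{1}{\kappa})\delta_0$, and uniqueness identifies $\mu$ with the self-similar profile.

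The main technical obstacle should be the error term $\mathcal{E}_j$: the derivatives of $\varphi$ blow up as $x \downarrow 0$ according to \eqref{selfsim-varphi-est-3}--\eqref{selfsim-varphi-est-4} while the coefficient $x$ degenerates there, so a careful splitting into $\{x < 1/t_j\}$ and $\{x \geq 1/t_j\}$ is needed and the discrete weights $k-1$, $k$ must be expanded with matching accuracy. A secondary subtlety is bookkeeping: the rescaled inhomogeneity $\mathcal{I}_j$ lives in a boundary layer of width $O(1/t_j)$ near $s=0$, and one must be careful to combine it cleanly with the initial-data term so that the two contributions $\rho - f(0)$ and $f(0) - \tfrac{1}{\kappa}$ assemble without double counting into the enhanced Dirac mass $\rho - \tfrac{1}{\kappa}$.
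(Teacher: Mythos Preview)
Your proposal is correct and follows essentially the same approach as the paper: extract a weak* limit via Banach--Alaoglu and the bound \eqref{selfsim-Ctj_Linfty}, test the discrete equation against $\varphi$ of the form \eqref{selfsim-continuous-pure-diffusion-eqn-test-fcts}, and split the resulting identity into a discrete-to-continuous remainder, the inhomogeneity contribution, and the initial-data term, with the latter two combining to produce the Dirac mass $(\rho-\tfrac{1}{\kappa})\delta_0$. The only noteworthy difference is that the paper does not carry out the control of your $\mathcal{E}_j$ (its $\mathcal{R}_j$) directly but instead cites \cite[Prop.~3.7]{eichenberg-schlichting}, whereas you outline the Taylor-expansion-with-splitting argument yourself; your sketch of that step is in line with what is actually done there.
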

\begin{proof} 
	By \eqref{selfsim-Ctj_Linfty} and the Banach-Alaoglu theorem, for any $a > 0$ we can extract from $t_j$ a subsequence (not relabelled) such that $\Ctj \weakstar \bar{C}$ in $L^\infty([a, \infty) \times (0, \infty))$ for some \linebreak $\bar{C} \in L^\infty([a, \infty) \times (0, \infty))$. Thus, by a diagonal scheme we obtain a subsequence and a function $\bar{C} \in  L^\infty((0, \infty) \times (0, \infty))$ such that $\Ctj \weakstar \bar{C}$ in $L^\infty([a, \infty) \times (0, \infty))$ for all $a > 0$.
	
	We now want to identify the limit $\bar{C}$. We will first show that it is a weak solution according to Definition \ref{selfsim-def-weak-sol}.
	
	{\bf Step 1:}
	Let $\varphi$ be a function of the type (\ref{selfsim-continuous-pure-diffusion-eqn-test-fcts}) for some $g \in C_c^\infty$. Then we have for any $j \geq 1$ that
	\begin{align*}
	\int_{0}^{T} &\int_{[0, \infty)} \Ctj (s,x) \partial_s \varphi(s,x) \text{ d}x \text{ d}s = \int_{0}^{T} \int_{[0, \infty)} t_jC_{\lfloor t_jx \rfloor + 1}(t_js ) \partial_s \varphi(s,x) \text{ d}x \text{ d}s \\
	&= t_j \int_{0}^{T } \sum_{k=1}^\infty C_k(t_js) \int_{(k-1)t_{j}^{-1}}^{kt_j^{-1}} \partial_s \varphi(s, x) \text{ d}x \text{ d}s \\
	&= -t_j \int_{0}^{T } \sum_{k=1}^\infty \partial_sC_k(t_js)  \piphi \text{ d}s 
	+ t_j  \sum_{k=1}^\infty C_k(t_jT) \int_{(k-1)t_{j}^{-1}}^{kt_j^{-1}} \varphi(T, x) \text{ d}x  \\
	&\quad\,- t_j  \sum_{k=1}^\infty C_k(0) \int_{(k-1)t_{j}^{-1}}^{kt_j^{-1}}  \varphi(0, x) \text{ d}x, \\
	\end{align*}
	where we have used (\ref{selfsim-e&s-adjoint}) with $U(k) = C_k(t_js)$ and integration by parts. Using the equation for $C$ and that $\varphi(T,x) = 0$ for all $x \geq 0$, we further compute
	\begin{align*}
	\int_{0}^{T} \int_{[0, \infty)} \Ctj (s,x) \partial_s \varphi(s,x) \text{ d}x \text{ d}s	&= -t_j^2 \int_{0}^{T } \sum_{k=1}^\infty LC_k(t_js)  \piphi \text{ d}s \\
	&\quad \,- t_j^2 \int_{0}^{T } \sum_{k=1}^\infty G_k(t_js)  \int_{(k-1)t_{j}^{-1}}^{kt_j^{-1}} \varphi(s, x) \text{ d}x \text{ d}s \\
	&\quad\,- t_j  \sum_{k=1}^\infty C_k(0) \int_{(k-1)t_{j}^{-1}}^{kt_j^{-1}} \varphi(0, x) \text{ d}x \\
	&\eqqcolon I + II + III.
	\end{align*}
	
	The operator $L$ is symmetric. Thus we can write
	\begin{align*}
	I &= -t_j^2 \int_{0}^{T } \sum_{k=1}^\infty C_k(t_js) L\left( \piphi \right) \text{ d}s \\
	&= -t_j \int_{0}^{T } \sum_{k=1}^\infty C_k(t_js)  \int_{(k-1)t_{j}^{-1}}^{kt_j^{-1}} \partial_x \left( x  \partial_x  \varphi(s, x) \right) \text{ d}x  \text{ d}s \\
	&\quad \,- t_j \int_{0}^{T } \sum_{k=1}^\infty C_k(t_js) \underbrace{ \left[ t_jL\left( \piphi \right) -  \int_{(k-1)t_{j}^{-1}}^{kt_j^{-1}} \partial_x \left( x  \partial_x  \varphi(s, x) \right) \text{ d}x \right] }_{=: \mathcal{R}_j(\varphi, k, s)} \text{ d}s \\
	&= - \int_{0}^{T} \int_{[0, \infty)} t_jC_{\lfloor t_jx \rfloor + 1}(t_js) \partial_x \left( x  \partial_x  \varphi(s, x) \right) \text{ d}x  \text{ d}s 
	- t_j \int_{0}^{T } \sum_{k=1}^\infty C_k(t_js) \mathcal{R}_j(\varphi, k, s) \text{ d}s,
	\end{align*}
	where we have used \eqref{selfsim-e&s-adjoint}. We now have
	\begin{align}\label{selfsim-approx-eqn-Ctj}
	\int_{0}^{T} &\int_{[0, \infty)} \Ctj (s,x) \Big( \partial_s \varphi(s,x) + \partial_x \left( x  \partial_x  \varphi(s, x) \right) \Big) \text{ d}x \text{ d}s \nonumber \\ 
	&= - t_j \int_{0}^{T } \sum_{k=1}^\infty C_k(t_js) \mathcal{R}_j(\varphi, k, s) \text{ d}s + II + III.
	\end{align}
	Since $\partial_s \varphi + \partial_x \left( x  \partial_x  \varphi \right) = g \in C_c^\infty((0,T) \times (0, \infty) )$ and $\Ctj \rightharpoonup^* \bar{C}$ in $L^\infty([a,\infty) \times (0, \infty) )$ for all $a > 0$, the left hand side converges to
	$$\int_{0}^{T} \int_{[0, \infty)} \bar{C} (s,x) \Big( \partial_s \varphi(s,x) + \partial_x \left( x  \partial_x  \varphi(s, x) \right) \Big) \text{ d}x \text{ d}s$$
	as $j \to \infty$.
	
	{\bf Step 2:}
	We want to understand the limits of the terms on the right hand side of \ref{selfsim-approx-eqn-Ctj} as $j \to \infty$.
	We begin with $III$.
	First, we observe that by (\ref{selfsim-e&s-adjoint}),
	\begin{align*}
	\int_{[0, \infty)} \Ctj(0,x) \text{ d}x = t_j \int_{[0, \infty)} C_{\lfloor t_jx \rfloor + 1}(0) \text{ d}x &= \sum_{k=1}^\infty C_k(0) = \rho - f(0).
	\end{align*}
	
	Because $\varphi(0,\cdot)$ is continuous, for $\varepsilon > 0$ there exists some  $\delta > 0$ such that $0 \leq x < \delta$ implies $\vert \varphi(0,x) - \varphi(0,0) \vert < \varepsilon$. Furthermore, $\varphi$ is bounded and hence there exists a constant $M > 0$ such that $\left\vert \varphi(0,x) \right\vert \leq M$ for all $x \geq 0$.
	We further notice that for any $\varepsilon > 0$ and $\delta > 0$ there exists $j$ sufficiently large such that
	\begin{align*}
	\int_{[\delta, \infty)} \Ctj(0,x) \text{ d}x &= t_j \int_{[\delta, \infty)} C_{\lfloor t_jx \rfloor + 1}(0) \text{ d}x \leq \sum_{k \geq \lfloor \delta t_j \rfloor + 1} C_k(0) \leq \varepsilon
	\end{align*}
	since $\sum_{k=1}^\infty kc_k(0) < \infty$. Hence
	\begin{align*}
	\Big\vert  \int_{[0, \infty)}& \Ctj(0,x) \varphi(0,x) \text{ d}x - (\rho - f(0)) \varphi(0,0) \Big\vert \\
	&= \Big\vert  \int_{[0, \infty)} \Ctj(0,x) \left(\varphi(0,x) - \varphi(0,0) \right)  \text{ d}x \Big\vert \\
	&\leq \int_{[0, \delta)} \Ctj(0,x) \left\vert\varphi(0,x) - \varphi(0,0) \right\vert  \text{ d}x \,+ \int_{[\delta, \infty)} \Ctj(0,x) \left\vert\varphi(0,x) - \varphi(0,0) \right\vert  \text{ d}x \\
	&\leq \left(\rho - f(0)\right) \varepsilon + 2M \varepsilon,
	\end{align*}
	and therefore
	\begin{equation}\label{selfsim-limit-eqn-initial-value-1}
	III = -\int_{[0, \infty)} \Ctj(0,x) \varphi(0,x) \text{ d}x \to (f(0) - \rho) \varphi(0,0)
	\end{equation}
	as $j \to \infty$.
	
	{\bf Step 3:}
	We now turn our eye to II, the term involving the inhomogeneity $G$. We observe that by \eqref{selfsim-e&s-adjoint},
	\begin{align*}
	t_j^2 \int_{0}^{T } &\int_{0}^{\infty}  G_{\lfloor t_j x \rfloor + 1}(t_js) \text{ d}x \text{ d}s = t_j^2 \int_{0}^{T } \sum_{k=1}^\infty G_k(t_js)  \int_{(k-1)t_{j}^{-1}}^{kt_j^{-1}} 1 \text{ d}x \text{ ds} \\
	&= \int_{0}^{t_jT} (\kappa f(r) - 1)(\rho - f(r)) \text{ d}r 
	= - \int_{0}^{t_jT} \partial_r f(r) \text{ d}r = f(0) - f(t_jT).
	\end{align*}
	By \eqref{selfsim-e&s-adjoint}, the decay of $G$ from Lemma \ref{selfsim-estimates-lemma}, the monotonicity of $f$ and the continuity and boundedness of $\varphi$, similarly to the previous computation it follows that
	\begin{equation}\label{selfsim-convergence-G-term-to-dirac}
	II = -t_j^2 \int_{0}^{T } \sum_{k=1}^\infty G_k(t_js)  \int_{(k-1)t_{j}^{-1}}^{kt_j^{-1}} \varphi(s, x) \text{ d}x \text{ ds} \to \Big(\frac{1}{\kappa} - f(0) \Big) \varphi(0,0)
	\end{equation}
	as $j \to \infty$.
	In combination with (\ref{selfsim-limit-eqn-initial-value-1}), this gives
	\begin{align*}
	II + III \To  -\Big(\rho - \frac{1}{\kappa}\Big)  \int_{[0, \infty)} \varphi(0, x) \text{ d}\delta_0(x).
	\end{align*}
	
	{\bf Step 4:} The fact that
	\begin{equation*}
	t_j \int_{0}^{T } \sum_{k=1}^\infty C_k(t_js) \mathcal{R}_j(\varphi,k, s) \text{ d}s \to 0
	\end{equation*}
	as $j \to \infty$, where
	\begin{equation*}
	\mathcal{R}_j(\varphi,k, s) = t_jL\Big( \piphi \Big) -  \int_{(k-1)t_{j}^{-1}}^{kt_j^{-1}} \partial_x \left( x  \partial_x  \varphi(s, x) \right) \text{ d}x,
	\end{equation*}
	is shown in \cite[Prop. 3.7]{eichenberg-schlichting}, who treat the homogeneous problem using the same class of test functions. 
	
	{\bf Conclusion:} We can therefore take the limit in \eqref{selfsim-approx-eqn-Ctj}
	to obtain that
	\begin{align*}
	\int_{0}^{T} \int_{[0, \infty)} \left( \partial_s \varphi + \partial_x \left(x \partial_x \varphi \right) \right) \bar{C}(s,x) \text{ d}x \text{ d}s = -\Big(\rho - \frac{1}{\kappa}\Big) \int_{[0, \infty)} \varphi(0, x) \text{ d}\delta_0(x),
	\end{align*}
	that is, $\bar{C}$ is a weak solution of \eqref{selfsim-cont-pure-diffusion-eqn-for-tails} with initial data $\mu_0 = (\rho - \frac{1}{\kappa}) \delta_0$. One easily checks that $\frac{(\rho - \frac{1}{\kappa})}{s}e^{-\frac{x}{s}}$ is also a weak solution with the same initial data, and since weak solutions are distributionally unique by \eqref{selfsim-continuous-pure-diffusion-weak} it follows that $\bar{C}(s,x) = \frac{(\rho - \frac{1}{\kappa})}{s}e^{-\frac{x}{s}}$ almost everywhere in $(0, T) \times (0, \infty)$, concluding the proof.
\end{proof}
Our primary interest though is in the behaviour of $\Ctj$ for $s = 1$, and this is a null set in $(0, T) \times (0, \infty)$. Our strategy will be to consider another limit of $\Ctj$ that is attained uniformly in time, and to deduce by comparison that this limit must be equal almost everywhere to $\bar{C}$. The difficulty here is establishing equicontinuity in time for $\{\Ctj \}$ in a suitable sense. This is achieved in the next section.

\subsection{Weak equicontinuity in time}
We will use a variation of the Arzel\`{a}-Ascoli theorem that requires equicontinuity only in a weak sense \cite[A.3]{vrabie}.

Let $Y$ be a real Banach space and $a < b$ be real numbers. Then the set of weakly continuous functions $u: [a,b] \to Y$ is given by
\begin{equation*}
C([a,b], Y_w) \coloneqq \big\{  u: [a,b] \to Y \text{ s.t. } \forall y^* \in Y^* : t \mapsto \langle y^*, u(t) \rangle \in C([a,b]) \big\},
\end{equation*}
where $\langle \cdot, \cdot \rangle$ denotes the dual pairing.

A topology on $C([a,b], Y_w)$ can be defined via the family of seminorms $ \lVert \cdot \rVert_{\mathcal{Y}^{*}}$ for nonempty and finite
$\mathcal{Y}^* \subset Y^*$ given by
\begin{equation*}
\lVert u \rVert_{\mathcal{Y}^{*}} \coloneqq \sup_{t \in [a,b]} \sup_{y^{*} \in \mathcal{Y}^{*}} \vert \langle y^*, u(t) \rangle \vert.
\end{equation*}
A variation of the classical Arzel\`{a}-Ascoli theorem characterizes sequentially relatively compact sets in this space. 
\begin{theorem}\label{selfsim-weak-arzela-ascoli}
	Let the real Banach space $Y$ be weakly sequentially complete. A family \linebreak $\mathcal{F} \subset C([a,b], Y_w)$ is sequentially relatively compact if and only if
	\begin{enumerate}[(i)]
		\item $\mathcal{F}$ is weakly equicontinuous, that is, for all $y^* \in Y^*$, the family $\left\{t \mapsto \langle y^*, u(t) \rangle  \right\}_{u \in \mathcal{F}}$ is equicontinuous on $[a,b]$, and
		\item there exists a dense subset $D \subset [a,b]$ such that for every $t \in D$ the set  $\left\{u(t): u \in \mathcal{F} \right\} \subset Y$ is sequentially weakly relatively compact.
	\end{enumerate}
\end{theorem}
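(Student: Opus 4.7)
This is a weak-topology variant of the classical Arzelà--Ascoli theorem, and the plan is to follow its template: Cantor diagonalisation at a countable dense set of times, a $3\varepsilon$-extension via equicontinuity, and invocation of weak sequential completeness of $Y$ to produce the limit at each time.

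Necessity is essentially formal. If $\mathcal{F}$ is sequentially relatively compact in $C([a,b], Y_w)$, then for each $y^* \in Y^*$ the map $u \mapsto \langle y^*, u(\cdot)\rangle$ sends $\mathcal{F}$ into a sequentially relatively compact subset of $C([a,b])$ equipped with the uniform norm, whence classical Arzelà--Ascoli gives the equicontinuity required in (i). Condition (ii) holds with $D = [a,b]$, since any subsequence converging in $C([a,b], Y_w)$ is in particular pointwise weakly convergent in $Y$ at every $t$.

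Sufficiency is the substantive direction. Given a sequence $\{u_n\} \subset \mathcal{F}$, fix a countable dense subset $D_0 \subset D$. Using (ii) and a Cantor diagonal argument, extract a subsequence $\{u_{n_k}\}$ such that $u_{n_k}(t)$ converges weakly in $Y$ for every $t \in D_0$. Now fix any $y^* \in Y^*$: the scalar functions $t \mapsto \langle y^*, u_{n_k}(t)\rangle$ are equicontinuous on $[a,b]$ by (i) and converge pointwise on the dense set $D_0$, and a standard $3\varepsilon$-argument upgrades this to uniform convergence on $[a,b]$. Thus for every $t \in [a,b]$ and every $y^* \in Y^*$, the scalar sequence $\langle y^*, u_{n_k}(t)\rangle$ converges, so $\{u_{n_k}(t)\}$ is weakly Cauchy in $Y$. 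Weak sequential completeness then delivers $u(t) \in Y$ with $u_{n_k}(t)$ converging weakly to $u(t)$; continuity of $t \mapsto \langle y^*, u(t)\rangle$ follows from its being a uniform limit of continuous functions, so $u \in C([a,b], Y_w)$. Convergence in each seminorm $\lVert\cdot\rVert_{\mathcal{Y}^{*}}$ with $\mathcal{Y}^* \subset Y^*$ finite is then immediate by maximising the already-established uniform-in-$t$ convergence over the finitely many elements of $\mathcal{Y}^*$.

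The main (mild) obstacle is to ensure that a \emph{single} subsequence works against every $y^* \in Y^*$ simultaneously, rather than each $y^*$ requiring its own subsequence. This is precisely why one must first use (ii) to extract a subsequence producing \emph{weak} convergence of $u_{n_k}(t)$ (i.e., convergence tested against all $y^*$ at once) at each $t \in D_0$, and only afterwards invoke the equicontinuity hypothesis (i) --- one $y^*$ at a time --- to promote pointwise convergence on $D_0$ to uniform convergence on $[a,b]$. Any attempt to reverse this order would require selecting different subsequences for different $y^*$ and then re-diagonalising over a countable determining subset of $Y^*$, which would impose extra topological assumptions on $Y$ that are not present in the hypotheses.
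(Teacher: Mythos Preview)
The paper does not prove this theorem; it is quoted from \cite[A.3]{vrabie} and used as a black box, so there is no in-paper argument to compare against. Your proof is correct and is the standard one: diagonalise over a countable dense subset of $D$ using (ii), upgrade via the scalar equicontinuity (i) and a $3\varepsilon$-argument to convergence of $\langle y^*, u_{n_k}(t)\rangle$ uniformly in $t$ for each fixed $y^*$, and then invoke weak sequential completeness to produce the limit $u(t)$ at every $t$. Your remark about the order of extraction --- first (ii) at each $t\in D_0$ to get a single subsequence good for all $y^*$, only then (i) one $y^*$ at a time --- is exactly the point that makes the argument work without separability assumptions on $Y^*$.
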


As the target space $Y$ we choose the dual of an appropriately weighted Sobolev space.

\begin{definition}
	We define the weighted Sobolev space
	\begin{align*}
	&\quad \; \Sobolev \\
	&\coloneqq \Big\{u \in W^{2,2}((0, \infty)): 
	\Vert u\Vert_W:=\int_{0}^{\infty} (u^2 + (\partial_x u)^2 + (\partial_x^2 u)^2)(1 + x)^2 \text{ d}x < \infty  \Big\}.
	\end{align*}
	and denote for brevity $W := \Sobolev$. 
\end{definition}

The space $W$ endowed with $\Vert\cdot \Vert_W$ is a reflexive Banach space.
We identify $\Ctj(s, \cdot)$ with the function in $W^*$ given by
\begin{equation*}
u \mapsto \int_{0}^{\infty} \Ctj(s,x)u(x) \text{ d}x.
\end{equation*}
Indeed this function is linear and continuous on $W$ for $s > 0$ since by \eqref{selfsim-Ctj_L1} and \eqref{selfsim-Ctj_Linfty}
\begin{align*}
\int_{0}^{\infty} \Ctj(s,x)u(x) \text{ d}x 
&\leq \left(\int_{0}^{\infty}  u(x)^2 (1 + x)^2 \text{ d}x  \right)^{\frac{1}{2}} \left(\int_{0}^{\infty} \left( \Ctj(s,x) \right)^2  \text{ d}x \right)^{\frac{1}{2}} \\
&\leq \lVert u \rVert_{W} \lVert \Ctj(s, \cdot) \rVert_{L^{\infty}((0,\infty))}^{\frac{1}{2}} \lVert \Ctj(s, \cdot) \rVert_{L^{1}((0,\infty))}^{\frac{1}{2}} \\
&\lesssim \rho^{\frac{1}{2}}s^{-\frac{1}{2}}\lVert u \rVert_{W}.
\end{align*}

It is convenient to choose the dual of a reflexive space as the target space $Y$ since we can estimate $\langle u^{**}, \Ctj(s, \cdot) \rangle$ for $u^{**} \in W^{**}$ by $\langle \Ctj(s, \cdot), u \rangle$ for $u \in W$, for which we have an explicit formula.
Furthermore, since $W$ is reflexive it holds that $W^*$ is also reflexive and in particular weakly sequentially complete. 
We obtain the following result for $s = 1$. 

\begin{theorem}\label{selfsim-main theorem}
	There exists a subsequence $j \to \infty$ such that for all $u \in W$ it holds that
	$$\int_{0}^{\infty} \Ctj(1,x)u(x) \text{ d}x \to \int_{0}^{\infty} u(x) \Big(\rho - \frac{1}{\kappa}\Big) e^{-x} \text{ dx}$$
	as $j \to \infty.$
\end{theorem}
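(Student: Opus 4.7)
The plan is to apply the weak Arzel\`{a}--Ascoli theorem (Theorem~\ref{selfsim-weak-arzela-ascoli}) to the family $\{s \mapsto \Ctj(s,\cdot)\}_{j}$ viewed as a sequence in $C([a,T], (W^{*})_{w})$ for fixed $0<a<1<T$, and then to identify its cluster points via the weak-$*$ limit $\bar C(s,x)=(\rho-1/\kappa)s^{-1}e^{-x/s}$ from Theorem~\ref{thm-weak*-limit_sx}. Once a subsequence converges uniformly in $s$ in the weak topology of $W^{*}$ to some $\hat C(s,\cdot)$, one can test the weak-$*$ convergence of $\Ctj$ in $L^{\infty}((a,T)\times(0,\infty))$ against $\eta(s)u(x)$ with $\eta\in C_{c}((a,T))$ and $u\in W$ (noting $u\in L^{1}\cap L^{\infty}$ via the weighted-$W^{2,2}$ embedding in one dimension) to see that $\langle\hat C(s,\cdot),u\rangle=\int_0^\infty\bar C(s,x)u(x)\,dx$ for a.e.\ $s$, and hence for every $s\in[a,T]$ by weak continuity of $\hat C$ in $s$. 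Evaluating at $s=1$ then yields the theorem.

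For the sequential weak relative compactness of $\{\Ctj(s,\cdot)\}_{j}$ in $W^{*}$ at each $s\in[a,T]$, I would combine \eqref{selfsim-Ctj_L1} and \eqref{selfsim-Ctj_Linfty} to get $\|\Ctj(s,\cdot)\|_{L^{2}}^{2}\leq \|\Ctj(s,\cdot)\|_{L^{1}}\|\Ctj(s,\cdot)\|_{L^{\infty}}\lesssim \rho/a$ uniformly in $j$, and use the trivial embedding $W\hookrightarrow L^{2}$ (since the weight $(1+x)^{2}\geq 1$) to bound $\|\Ctj(s,\cdot)\|_{W^{*}}$ uniformly. Reflexivity of $W$, hence of $W^{*}$, together with Banach--Alaoglu then yields weak sequential compactness, so hypothesis $(ii)$ of Theorem~\ref{selfsim-weak-arzela-ascoli} is satisfied on all of $[a,T]$.

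The heart of the argument is the weak equicontinuity in $s$. Fix $u\in W$ and set $\varphi_{j}(s)=\langle\Ctj(s,\cdot),u\rangle=t_{j}\sum_{k\geq 1}C_{k}(t_{j}s)U^{(j)}_{k}$ with $U^{(j)}_{k}=\int_{(k-1)/t_{j}}^{k/t_{j}}u\,dx$. Differentiating in $s$ using \eqref{selfsim-discrete-eqn-for-tails-simple-notation} and moving the discrete operator $L$ onto the sequence $(U^{(j)}_{k})_{k}$ by the same symmetrisation as in the proof of Theorem~\ref{thm-weak*-limit_sx}, one writes $\varphi_{j}'(s)$ as the sum of a principal continuous part $-\int_{0}^{\infty}\Ctj(s,x)\,\partial_{x}(x\partial_{x}u)\,dx$, a discretisation remainder $\mathcal{R}_{j}(u,s)$ of exactly the form controlled in \cite[Prop.~3.7]{eichenberg-schlichting}, and a $G$-contribution $\Gamma_{j}(s)=t_{j}^{2}\sum_{k}G_{k}(t_{j}s)U^{(j)}_{k}$. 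The principal term is bounded by Cauchy--Schwarz:
\[
\left|\int_{0}^{\infty}\Ctj(s,x)\,\partial_{x}(x\partial_{x}u)\,dx\right|\leq \|\Ctj(s,\cdot)\|_{L^{2}}\,\|\partial_{x}u+x\partial_{x}^{2}u\|_{L^{2}}\lesssim (\rho/a)^{1/2}\|u\|_{W},
\]
uniformly for $s\in[a,T]$, where the weight $(1+x)^{2}$ absorbs the factor $x$ in front of $\partial_{x}^{2}u$. For the $G$-contribution, I would exploit the identity $\sum_{k\geq 1}G_{k}(r)=(\kappa f(r)-1)(\rho-f(r))=-\partial_{r}f(r)$, the fact that all $G_{k}$ have a common sign (since $f$ is monotone and $f-1/\kappa$ does not change sign), and $u\in L^{\infty}$; a substitution $r=t_{j}s$ and the monotonicity of $f$ give
\[
\left|\int_{s_{1}}^{s_{2}}\Gamma_{j}(s)\,ds\right|\leq \|u\|_{L^{\infty}}\,|f(t_{j}s_{2})-f(t_{j}s_{1})|,
\]
which is $o_{j}(1)$ uniformly in $s_{1},s_{2}\in[a,T]$ by the exponential convergence $f\to 1/\kappa$ in Proposition~\ref{P.longtime}. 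Combined with the estimate for $\mathcal{R}_{j}$ (integrated in time against $\Ctj$), this yields $|\varphi_{j}(s_{2})-\varphi_{j}(s_{1})|\leq C_{u,a,\rho}|s_{2}-s_{1}|+o_{j}(1)$, which is enough to invoke Theorem~\ref{selfsim-weak-arzela-ascoli}.

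The main obstacle is the equicontinuity step, specifically making the discretisation remainder $\mathcal{R}_{j}$ uniformly small on $[a,T]$ for a generic $u\in W$: the test functions in \cite[Prop.~3.7]{eichenberg-schlichting} are strictly more regular, so one most likely needs a density argument (approximating $u$ in the $W$-norm by smooth functions and using the uniform $W^{*}$-bound on $\Ctj$) to transfer their estimate to the present setting. The remaining identification step is then routine once the abstract compactness is in place.
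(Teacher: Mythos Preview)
Your overall plan—apply the weak Arzel\`{a}--Ascoli theorem in $C([a,T],W^{*}_{w})$, then identify the limit by testing the weak-$*$ $L^{\infty}$ convergence against $\eta(s)u(x)$—is exactly the paper's strategy, and your verification of condition $(ii)$ via reflexivity and the uniform $L^{2}$ bound is the same as well. The identification step at the end is also identical.

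Where you diverge from the paper is in the proof of weak equicontinuity, specifically the $L$-contribution. You propose to split $t_{j}^{2}\sum_{k}C_{k}(t_{j}s)\,L U^{(j)}_{k}$ into a continuous principal part $\int_{0}^{\infty}\Ctj(s,x)\partial_{x}(x\partial_{x}u)\,dx$ plus a discretisation remainder $\mathcal{R}_{j}$, to be controlled via \cite[Prop.~3.7]{eichenberg-schlichting} and a density argument in $W$. The paper does \emph{not} do this. Instead, it bounds $t_{j}^{2}L\big(\int_{(k-1)/t_{j}}^{k/t_{j}}u\big)$ directly for each $k$ by writing the discrete second difference as an iterated integral of $\partial^{2}u$, applying H\"{o}lder/Jensen, and exploiting that the weight $(1+x)^{2}$ in the definition of $W$ absorbs the factor $k/t_{j}$. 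This yields a uniform pointwise bound $|\varphi_{j}'(s)|\lesssim(a^{-1/2}+a^{-1}+a^{-2})(\|u\|_{W}+\|u\|_{W}^{2})$ on $[a,T]$ using only $W^{2,2}$ regularity of $u$, so no density argument is needed at all. Your approach does work (smooth compactly supported approximants $u_{\varepsilon}$ satisfy stronger derivative bounds than the adjoint test functions, so the argument behind \cite[Prop.~3.7]{eichenberg-schlichting} goes through for them, and the uniform $W^{*}$ bound on $\Ctj$ lets you pass to the limit in $\varepsilon$), but it is less direct: the paper's choice of the weighted space $W$ is precisely what makes the direct estimate succeed without third-derivative information.

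A minor point: your handling of the $G$-contribution via $\sum_{k}G_{k}=-\partial_{t}f$ and integration gives an $o_{j}(1)$ contribution to $|\varphi_{j}(s_{2})-\varphi_{j}(s_{1})|$; the paper instead uses the crude pointwise bound $t_{j}^{2}e^{-dt_{j}s}\lesssim s^{-2}$ to obtain a genuine Lipschitz bound on $[a,T]$. Both are valid; the asymptotic equicontinuity $C|s_{2}-s_{1}|+o_{j}(1)$ you arrive at does suffice for Theorem~\ref{selfsim-weak-arzela-ascoli}, since the finitely many small $j$ are handled individually.
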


\begin{proof}
	Our first step is to show that $\Ctj$ has a subsequence that converges in $C([a,T], W^*_w)$ for all $a > 0$ using Theorem \ref{selfsim-weak-arzela-ascoli}, where $T > 1$ is arbitrary. Let $a > 0$.
	We prove that $\Ctj$ are weakly equicontinuous, which also implies that $\Ctj \in C([a,T], W^*_w)$ for all $j$. For this, we show that for all $u \in W$ the functions
	$\left\{ s \mapsto \langle  \Ctj(s, \cdot), u \rangle \right\}_{j \geq 1}$
	are equicontinuous on $[a,T]$, where 
	$$\langle  \Ctj(s, \cdot), u \rangle =  \int_{0}^{\infty} \Ctj(s,x)u(x) \text{ d}x.$$
	We compute
	\begin{align*}
	\partial_s  \int_{0}^{\infty} \Ctj(s,x)u(x) \text{ d}x 
	&= \partial_s \int_{0}^{\infty} t_j C_{\lfloor t_jx \rfloor + 1}(t_js)u(x) \text{ d}x
	= \partial_s \sum_{k=1}^\infty t_j C_k(t_js) \int_{(k-1)t_j^{-1}}^{kt_j^{-1}} u(x) \text{ d}x \\
	&= \sum_{k=1}^\infty t_j^2 LC_k(t_js) \int_{(k-1)t_j^{-1}}^{kt_j^{-1}} u(x) \text{ d}x 
	+ \sum_{k=1}^\infty t_j^2 G_k(t_js) \int_{(k-1)t_j^{-1}}^{kt_j^{-1}} u(x) \text{ d}x \\
	&\eqqcolon I + II.
	\end{align*}
	For the second summand, we have that
	\begin{align*}
	II &= t_j^2 \left(\kappa f(t_js) - 1 \right) \sum_{k=1}^\infty (k-1)c_{k-1} \int_{(k-1)t_j^{-1}}^{kt_j^{-1}} u(x) \text{ d}x \\
	&\leq t_j^2 \left(\kappa f(t_js) - 1 \right) \sup_{k \geq 1} \left((k-1)c_{k-1}\right)\sum_{k = 1}^\infty \int_{(k-1)t_j^{-1}}^{kt_j^{-1}} \vert u(x)\vert \text{ d}x,
	\end{align*}
	and by the exponential convergence of $\vert\kappa f - 1\vert \to 0$ and the fact that $e^{-x} \leq x^{-2}$ for all $x > 0$, this is bounded by
	\begin{align*}
	t_j^2 e^{-dt_{j}s} \int_{0}^{\infty} \vert u(x)\vert \text{ d}x 
	&\lesssim \frac{1}{s^2} \int_{0}^{\infty} \vert u(x)\vert \text{ d}x 
	\leq \frac{1}{a^2} \lVert u \rVert_{W}.
	\end{align*}
	We observe that by Morrey's theorem, 
	$W \subset W^{2,2}((0, \infty)) \subset C^1((0, \infty)).$
	So $u \in W$ is continuously differentiable, and for $k \geq 3$ it holds by definition of $L$ that
	\begin{align*}
	&\quad \, L\Big(\int_{(k-1)t_j^{-1}}^{kt_j^{-1}} u(x) \text{ d}x \Big) \\
	&= k \int_{(k-1)t_j^{-1}}^{kt_j^{-1}} u(x + t_j^{-1}) - u(x) \text{ d}x - (k-1) \int_{(k-2)t_j^{-1}}^{(k-1)t_j^{-1}} u(x + t_j^{-1}) - u(x) \text{ d}x \\
	&= k \int_{(k-1)t_j^{-1}}^{kt_j^{-1}} \int_{x}^{x + t_j^{-1}} \partial_s u(s) \text{ d}s \text{ d}x
	- (k-1) \int_{(k-2)t_j^{-1}}^{(k-1)t_j^{-1}} \int_{x}^{x + t_j^{-1}} \partial_s u(s) \text{ d}s \text{ d}x \\
	&= kt_j^{-1} \int_{(k-1)t_j^{-1}}^{(k+1)t_j^{-1}} \partial_s u(s) \text{ d}s 
	- (k-1)t_j^{-1} \int_{(k-2)t_j^{-1}}^{kt_j^{-1}}  \partial_s u(s) \text{ d}s \\
	&= kt_j^{-1} \int_{(k-1)t_j^{-1}}^{(k+1)t_j^{-1}} \partial_s u(s) - \partial_s u(s  - t_j^{-1}) \text{ d}s  + t_j^{-1} \int_{(k-2)t_j^{-1}}^{kt_j^{-1}} \partial_s u(s) \text{ d}s.
	\end{align*}
	By applying Hölder's inequality to the first term, we can estimate this by
	\begin{align*}
	&\quad \; kt_j^{-\frac{3}{2}} \Big(\int_{(k-1)t_j^{-1}}^{(k+1)t_j^{-1}} \Big(\partial_s u(s) - \partial_s u(s  - t_j^{-1}) \Big)^2 \text{ d}s   \Big)^{\frac{1}{2}}  
	+ t_j^{-1} \int_{(k-2)t_j^{-1}}^{kt_j^{-1}} \partial_s u(s) \text{ d}s \\
	&\leq kt_j^{-\frac{5}{2}} \Big( \int_{(k-1)t_j^{-1}}^{(k+1)t_j^{-1}} \Big( \frac{\partial_s u(s) - \partial_s u(s  - t_j^{-1})}{t_j^{-1}}\Big)^2  \text{ d}s \Big)^{\frac{1}{2}} 
	+ t_j^{-1} \int_{(k-2)t_j^{-1}}^{kt_j^{-1}} \partial_s u(s) \text{ d}s \\
	&\leq kt_j^{-\frac{5}{2}} \Big( \int_{(k-2)t_j^{-1}}^{(k+1)t_j^{-1}} (\partial_s^2 u(s))^2  \text{ d}s \Big)^{\frac{1}{2}} 
	+ t_j^{-1} \int_{(k-2)t_j^{-1}}^{kt_j^{-1}} \partial_s u(s) \text{ d}s,
	\end{align*}
	where we used that the Sobolev function $\partial_s u \in W^{1,2}((0, \infty))$ can be approximated by smooth functions on $((k-2)t_j^{-1}, (k+1)t_j^{-1})$, and for smooth $u$ we have by Jensen's inequality that
	\begin{align*}
	&\quad\; \Big( \int_{(k-1)t_j^{-1}}^{(k+1)t_j^{-1}} \Big( \frac{\partial_s u(s) - \partial_s u(s  - t_j^{-1})}{t_j^{-1}}\Big)^2  \text{ d}s \Big)^{\frac{1}{2}} \\
	&= \Big( \int_{(k-1)t_j^{-1}}^{(k+1)t_j^{-1}}  \Big(t_j \int_{s - t_j^{-1}}^{s} \partial_r^2 u(r) \text{ d}r\Big)^2  \text{ d}s \Big)^{\frac{1}{2}}  
	\leq \Big( \int_{(k-1)t_j^{-1}}^{(k+1)t_j^{-1}} t_j \int_{s - t_j^{-1}}^{s} \left(\partial_r^2 u(r) \right)^2 \text{d}r  \text{ d}s \Big)^{\frac{1}{2}} \\
	&= \Big( \int_{(k-2)t_j^{-1}}^{(k+1)t_j^{-1}} \left(\partial_r^2 u(r) \right)^2 \int_{r}^{r + t_j^{-1}} t_j \text{ d}s  \text{ d}r \Big)^{\frac{1}{2}}
	= \Big( \int_{(k-2)t_j^{-1}}^{(k+1)t_j^{-1}} (\partial_s^2 u(s))^2  \text{ d}s \Big)^{\frac{1}{2}}.
	\end{align*}
	Additionally, since $kt_j^{-1} \leq s + t_j^{-1}$ for all $s \geq (k-1)t_j^{-1}$ we have that
	\begin{align*}
	kt_j^{-\frac{5}{2}} \Big( \int_{(k-2)t_j^{-1}}^{(k+1)t_j^{-1}} (\partial_s^2 u)^2  \text{ d}s \Big)^{\frac{1}{2}} 
	\leq t_j^{-\frac{3}{2}} \Big( \int_{(k-2)t_j^{-1}}^{(k+1)t_j^{-1}} (s + 2t_j^{-1})^2 (\partial_s^2 u)^2  \text{ d}s \Big)^{\frac{1}{2}} 
	\end{align*}
	All in all, we have for $k \geq 3$
	\begin{align*}
	\Big\vert L\Big(\int_{(k-1)t_j^{-1}}^{kt_j^{-1}} u(x) \text{ d}x \Big) \Big\vert 
	\leq t_j^{-\frac{3}{2}} \Big( \int_{(k-2)t_j^{-1}}^{(k+1)t_j^{-1}} (s + 2t_j^{-1})^2 (\partial_s^2 u)^2  \text{ d}s \Big)^{\frac{1}{2}} 
	+ t_j^{-1} \int_{(k-2)t_j^{-1}}^{kt_j^{-1}} \vert \partial_s u(s)\vert \text{ d}s.
	\end{align*}
	For $k = 1$ we estimate
	\begin{align*}
	L\Big(\int_{(k-1)t_j^{-1}}^{kt_j^{-1}} u(x) \text{ d}x \Big)\Big\vert_{k = 1}
	&= \int_{0}^{t_j^{-1}} u(x + t_j^{-1}) - u(x) \text{ d}x \\
	&= \int_{0}^{t_j^{-2}} u(x + t_j^{-1}) - u(x) \text{ d}x
	+ \int_{t_j^{-2}}^{t_j^{-1}} u(x + t_j^{-1}) - u(x) \text{ d}x \\
	&\leq 2 \int_{0}^{t_j^{-2}} \vert u(x)\vert \text{ d}x 
	+ \int_{t_j^{-2}}^{t_j^{-1}} \int_{x}^{x + t_j^{-1}} \partial_s u(s) \text{ d}s \text{ d}x \\
	&\lesssim \Big(\int_{0}^{t_j^{-2}} \vert u(x)\vert^2 \text{ d}x \Big)^{\frac{1}{2}} t_j^{-1}
	+ t_j^{-1}\int_{t_j^{-2}}^{2t_j^{-1}} \partial_s u(s) \text{ d}s \\
	&\leq t_j^{-1} \left( \lVert u \rVert_{W} + \lVert u \rVert_{W}^2 \right),
	\end{align*}
	and for $k = 2$ we have by a similar computation that
	\begin{align*}
	L\Big(\int_{(k-1)t_j^{-1}}^{kt_j^{-1}} u(x) \text{ d}x \Big)\Big\vert_{k = 2}&= 2 \int_{t_j^{-1}}^{2t_j^{-1}} u(x + t_j^{-1}) \text{ d}x - \int_{0}^{t_j^{-1}} u(x + t_j^{-1}) \text{ d}x \\
	&\lesssim t_j^{-1} \left( \lVert u \rVert_{W} + \lVert u \rVert_{W}^2 \right).
	\end{align*}
	Inserting these estimates into $I$ and using the symmetry of $L$ yields
	\begin{align*}
	I 
	&= t_j^2 \sum_{k=1}^\infty C_k(t_js) L\Big(\int_{(k-1)t_j^{-1}}^{kt_j^{-1}} u(x) \text{ d}x \Big) \\
	& \lesssim t_j^{\frac{1}{2}} \sum_{k=3}^\infty C_k(t_js) \Big( \int_{(k-2)t_j^{-1}}^{kt_j^{-1}} (s + 2t_j^{-1})^2 (\partial_s^2 u)^2  \text{ d}s \Big)^{\frac{1}{2}} \\
	&\quad \, + t_j \sum_{k=3}^\infty C_k(t_js) \int_{(k-3)t_j^{-1}}^{(k-1)t_j^{-1}} \partial_s u(s) \text{ d}s \\
	&\quad \, + t_j^2 \left( C_1(t_js) + C_2(t_js) \right) t_j^{-1} \left( \lVert u \rVert_{W} + \lVert u \rVert_{W}^2 \right) \\
	&\lesssim t_j^{\frac{1}{2}} \Big( \sum_{k=3}^\infty C_k(t_js)^2 \Big)^{\frac{1}{2}} \Big( \sum_{k=3}^\infty  \int_{(k-2)t_j^{-1}}^{kt_j^{-1}} (s + 1)^2 (\partial_s^2 u(s))^2  \text{ d}s \Big)^{\frac{1}{2}} \\
	&\quad \, + t_j \sup_{k \geq 1} C_k(t_js) \int_{0}^{\infty}  \vert\partial_s u(s)\vert \text{ d}s 
	+ \sup_{k \geq 1} C_k(t_js) \left( \lVert u \rVert_{W} + \lVert u \rVert_{W}^2 \right).
	\end{align*}
	Using \eqref{selfsim-Ctj_Linfty} and \eqref{selfsim-Ctj_L1}, the above can be estimated by
	\begin{align*}
	s^{-\frac{1}{2}} &\left( \int_{0}^{\infty} (s+1)^2(\partial_s^2 u(s))^2 \text{ d}s \right)^{\frac{1}{2}} 
	+ \frac{1}{s} \left( \int_{0}^{\infty}  (\partial_s u(s))^2(s + 1)^2 \text{ d}s \right)^{\frac{1}{2}} 
	+ \frac{1}{s} \left( \lVert u \rVert_{W} + \lVert u \rVert_{W}^2 \right) \\
	&\lesssim \left(\frac{1}{\sqrt{a}} + \frac{1}{a} \right)\left( \lVert u \rVert_{W} + \lVert u \rVert_{W}^2 \right)
	\end{align*}
	for all $s \in [a,T]$. All in all, we now have
	\begin{equation*}
	\Big\vert \partial_s  \int_{0}^{\infty} \Ctj(s,x)u(x) \text{ d}x \Big\vert
	\leq \left\vert I\right\vert + \left\vert II\right\vert
	\lesssim \left(\frac{1}{\sqrt{a}} + \frac{1}{a} +  \frac{1}{a^2} \right) \left( \lVert u \rVert_{W} + \lVert u \rVert_{W}^2 \right),
	\end{equation*} 
	implying that  for all $u \in W$ the functions $\left\{s \mapsto \int_{0}^{\infty} \Ctj(s,x)u(x) \text{ d}x\right\}_{j \geq 1}$ are equicontinuous on $[a,T]$.
	
	Next, we check that for all $s \in [a,T]$ the set
	$\left\{u \mapsto  \int_{0}^{\infty} \Ctj(s,x)u(x) \text{ d}x\right\}_{j \geq 1} \subset W^*$
	is sequentially weakly relatively compact. This follows easily from the reflexivity of $W^*$ and the fact that
	\begin{align*}
	\Big\vert \int_{0}^{\infty} \Ctj(s,x)u(x) \text{ d}x \Big\vert
	&\leq  \rho^{\frac{1}{2}}s^{-\frac{1}{2}}\lVert u \rVert_{W} \leq \rho^{\frac{1}{2}}a^{-\frac{1}{2}}\lVert u \rVert_{W}
	\end{align*}
	for $s \in [a,T]$. Hence the requirements of Theorem \ref{selfsim-weak-arzela-ascoli} are satisfied and we obtain a function $\hat{C} \in C([a,T],W^*_w)$ and a subsequence, again labelled $j$, such that  $\Ctj \to \hat{C} \in C([a,T],W^*_w)$. By a diagonal argument, we construct a subsequence converging in $C([a,T],W^*_w)$ for all $a > 0$ to a function $\hat{C}$. This implies that for all $u \in W$, the convergence
	\begin{equation}\label{selfsim-AA-limit-to-dual-pairing}
	\int_{0}^{\infty} \Ctj(s,x)u(x) \text{ d}x = \langle \Ctj(s, \cdot), u \rangle \to \langle \hat{C}(s), u \rangle
	\end{equation}
	holds uniformly on compact sets $[a, T]$. 
	
	Our next step is to identify the limit $\hat{C}$. The key tool is that we have already identified the weak* $L^\infty$ limit $\bar{C}(s,x) = \frac{(\rho - \frac{1}{\kappa})}{s}e^{-\frac{x}{s}}$. Let $u \in W$ and $\zeta \in C_c^\infty((0,T))$. Then on the one hand, since $\zeta$ has compact support, by \eqref{selfsim-AA-limit-to-dual-pairing} it holds that
	\begin{equation*}
	\lim\limits_{j \to \infty} \int_{0}^{T} \int_{0}^{\infty} \Ctj(s,x) u(x) \zeta(s) \text{ d}x \text{ d}s = \int_{0}^{T} \zeta(s) \langle \hat{C}(s, \cdot), u \rangle \text{ d}s.
	\end{equation*}
	On the other hand, since $W \subset L^1((0, \infty))$ and therefore $u \zeta \in L^1([a,T) \times (0, \infty))$ for some $a > 0$, we have by \ref{thm-weak*-limit_sx} that
	\begin{equation*}
	\lim\limits_{j \to \infty} \int_{0}^{T} \int_{0}^{\infty} \Ctj(s,x) u(x) \zeta(s) \text{ d}x \text{ d}s = \int_{0}^{T} \int_{0}^{\infty} \zeta(s) u(x) \frac{(\rho - \frac{1}{\kappa})}{s} e^{-\frac{x}{s}} \text{ d}x \text{ d}s.
	\end{equation*}
	This implies that 
	\begin{equation}\label{selfsim-eqn-for-hat-C-AA-limit}
	\langle \hat{C}(s, \cdot), u \rangle = \int_{0}^{\infty} u(x) \frac{(\rho - \frac{1}{\kappa})}{s} e^{-\frac{x}{s}} \text{ d}x
	\end{equation}
	for all $s \in (0,T)$ except on a set of measure zero, and since
	$s \mapsto \langle \hat{C}(s, \cdot), u \rangle$
	and
	$s \mapsto \int_{0}^{\infty} u(x) \frac{1}{2s} e^{-\frac{x}{s}} \text{ d}x$
	are both continuous on $(0,T)$, the equality (\ref{selfsim-eqn-for-hat-C-AA-limit}) holds for all $s \in (0,T)$. Hence \eqref{selfsim-AA-limit-to-dual-pairing} becomes
	\begin{equation*}
	\int_{0}^{\infty} \Ctj(s,x)u(x) \text{ d}x \to \int_{0}^{\infty} u(x) \frac{(\rho - \frac{1}{\kappa})}{s} e^{-\frac{x}{s}} \text{ d}x
	\end{equation*}
	uniformly on compact intervals in $(0,T)$.
	In particular, for $s = 1$ we obtain
	\begin{equation}
	\int_{0}^{\infty} \Ctj(1,x)u(x) \text{ d}x \to \int_{0}^{\infty} u(x) \Big(\rho - \frac{1}{\kappa}\Big) e^{-x} \text{ d}x
	\end{equation}
	for all $u \in W$.
\end{proof}
We can strengthen this result by applying the Banach-Alaoglu theorem to $\Ctj(1,x) \in L^\infty([0, \infty))$, yielding weak* convergence in $L^\infty$ to the same limit.
\begin{corollary}\label{selfsim-cor-s=1}
	There exists a subsequence $j \to \infty$ such that $\Ctj(1, x) \weakstar (\rho - \frac{1}{\kappa})e^{-x}$ in $L^\infty([0, \infty))$.
\end{corollary}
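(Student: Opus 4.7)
The plan is to combine the uniform $L^\infty$ bound on $\Ctj(1,\cdot)$ with the convergence on $W$-test functions already established in Theorem \ref{selfsim-main theorem}. First, by \eqref{selfsim-Ctj_Linfty} evaluated at $s=1$, the sequence $\{\Ctj(1,\cdot)\}_j$ is uniformly bounded in $L^\infty([0,\infty))$. Hence, by the Banach--Alaoglu theorem, passing if necessary to a further subsequence (still labelled $j$) of the subsequence produced in Theorem \ref{selfsim-main theorem}, there exists a limit $\hat{C}^{(1)} \in L^\infty([0,\infty))$ with
\begin{equation*}
\Ctj(1,\cdot) \weakstar \hat{C}^{(1)} \qquad \text{in } L^\infty([0,\infty)).
\end{equation*}

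The remaining task is to identify $\hat{C}^{(1)}(x) = \bigl(\rho - \tfrac{1}{\kappa}\bigr)e^{-x}$ almost everywhere. For every $u \in W$, the weak* convergence in $L^\infty$ together with $u \in L^1([0,\infty))$ (which follows from $u \in L^2((1+x)^2\,\mathrm{d}x)$ and Cauchy--Schwarz against $(1+x)^{-1}$) yields
\begin{equation*}
\int_0^\infty \Ctj(1,x)\, u(x)\,\mathrm{d}x \;\longrightarrow\; \int_0^\infty \hat{C}^{(1)}(x)\, u(x)\,\mathrm{d}x.
\end{equation*}
On the other hand, by Theorem \ref{selfsim-main theorem} the same integral also converges to $\int_0^\infty u(x)\bigl(\rho - \tfrac{1}{\kappa}\bigr)e^{-x}\,\mathrm{d}x$. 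Uniqueness of limits gives $\int_0^\infty \bigl(\hat{C}^{(1)}(x) - (\rho - \tfrac{1}{\kappa})e^{-x}\bigr)u(x)\,\mathrm{d}x = 0$ for every $u \in W$.

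Finally, one observes that $C_c^\infty((0,\infty)) \subset W$ (all weighted integrals are obviously finite for compactly supported smooth functions), and $C_c^\infty((0,\infty))$ is dense in $L^1([0,\infty))$; thus $W$ is dense in $L^1([0,\infty))$. Since $\hat{C}^{(1)} - (\rho - \tfrac{1}{\kappa})e^{-x} \in L^\infty([0,\infty))$ annihilates a dense subset of $L^1([0,\infty))$, it must vanish almost everywhere, proving $\hat{C}^{(1)}(x) = (\rho - \tfrac{1}{\kappa})e^{-x}$ and hence the claimed weak* convergence. There is no real obstacle here: Theorem \ref{selfsim-main theorem} has already done the essential work, and the corollary is just a soft upgrade via compactness and density.
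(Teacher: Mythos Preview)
Your proof is correct and follows essentially the same approach as the paper: apply Banach--Alaoglu using the uniform $L^\infty$ bound \eqref{selfsim-Ctj_Linfty}, then identify the weak* limit via Theorem \ref{selfsim-main theorem} by testing against $C_c^\infty \subset W$. The paper's version is slightly terser (it jumps straight to $C_c^\infty$ test functions), but the argument is the same.
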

\begin{proof}
	By \eqref{selfsim-Ctj_Linfty}, we have $\lVert \Ctj(1, \cdot) \rVert_{L^\infty([0, \infty))} \lesssim 1$, so by the Banach-Alaoglu theorem there exists a subsequence converging in the weak* sense in $L^\infty([0, \infty))$. By Theorem \ref{selfsim-main theorem} and testing against functions $u \in C_c^\infty$, the weak* limit must be equal almost everywhere to $(\rho - \frac{1}{\kappa})e^{-x}$.
\end{proof}

We can further improve the convergence locally by combining the previously established bound on the time derivatives $\partial_s \Ctj(s,x) \in W^*$ for $s \in [a,T]$ with a bound on the total variation.
\begin{theorem}\label{thm-sa-L^p_loc-convergence}
	Let $0 < a < T < \infty$. There exists a subsequence, again denoted by $t_j$, such that for all $1 \leq p < \infty$ and $M < \infty$ it holds that
	$$\Ctj(s,x) = t_jC(t_js, \lfloor t_j x \rfloor + 1) \to \frac{(\rho - \frac{1}{\kappa})}{s}e^{-\frac{x}{s}}$$
	in $C([a,T], L^p((0,M)))$.
\end{theorem}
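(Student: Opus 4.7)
The plan is to upgrade the weak convergence of Theorem \ref{selfsim-main theorem} to the desired strong convergence in $C([a,T], L^p((0,M)))$ by exploiting two ingredients: a uniform spatial BV bound on $\Ctj(s,\cdot)$, and the $W^*$-Lipschitz bound in the rescaled time $s$ that is effectively established in the proof of Theorem \ref{selfsim-main theorem}. The framework will be an Arzel\`{a}-Ascoli argument in $C([a,T], L^p((0,M)))$, split into pointwise (in $s$) compactness in $L^p((0,M))$ and equicontinuity in $s$.

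The BV bound is immediate: since $C_k(t) = \sum_{n \geq k} c_n(t)$ is monotonically decreasing in $k$, the step function $x \mapsto \Ctj(s,x)$ is monotonically decreasing in $x$ for every fixed $s > 0$ and $j$. Combined with \eqref{selfsim-Ctj_Linfty}, this gives $\|\Ctj(s,\cdot)\|_{\BV((0,M))} \lesssim a^{-1}$ uniformly in $j$ and $s \in [a,T]$, and by the compact embedding $\BV((0,M)) \hookrightarrow\hookrightarrow L^p((0,M))$ the family $\{\Ctj(s,\cdot)\}_j$ is relatively compact in $L^p((0,M))$ for each fixed $s$.

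The less immediate step is equicontinuity in $s$ with values in $L^p((0,M))$, uniformly in $j$. The computations in the proof of Theorem \ref{selfsim-main theorem} already yield $|\partial_s \langle \Ctj(s,\cdot), u\rangle| \lesssim \|u\|_W + \|u\|_W^2$ uniformly in $s \in [a,T]$ and $j$, so that for $\|u\|_W \leq 1$ integration in $s$ produces $\|\Ctj(s,\cdot) - \Ctj(s',\cdot)\|_{W^*} \lesssim |s-s'|$. I would then apply Ehrling's lemma to the chain $\BV((0,M)) \cap L^\infty((0,M)) \hookrightarrow\hookrightarrow L^p((0,M)) \hookrightarrow W^*$, the last continuous embedding following from the Morrey embedding $W \hookrightarrow C^0_b((0,\infty))$ combined with the boundedness of $(0,M)$, to obtain, for any $\varepsilon > 0$,
\begin{equation*}
\|\Ctj(s,\cdot) - \Ctj(s',\cdot)\|_{L^p((0,M))} \leq \varepsilon\, C_1 + C_\varepsilon\, |s-s'|,
\end{equation*}
which is exactly the uniform equicontinuity needed.

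With both ingredients in hand, Arzel\`{a}-Ascoli yields a subsequence converging in $C([a,T], L^p((0,M)))$ to some limit; testing this limit against compactly supported $u \in W$ and invoking the identity \eqref{selfsim-eqn-for-hat-C-AA-limit} from the proof of Theorem \ref{selfsim-main theorem} pins the limit down as $\tfrac{\rho - 1/\kappa}{s}\, e^{-x/s}$. The main technical obstacle I anticipate is carrying out the Ehrling step cleanly, in particular verifying the continuous embedding $L^p((0,M)) \hookrightarrow W^*$ in view of the $(1+x)^2$ weight entering $W$; this is the point at which the weighted structure of the underlying Sobolev space is most delicate.
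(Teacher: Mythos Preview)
Your proposal is correct and follows essentially the same approach as the paper: both rely on the uniform $\BV((0,M))$ bound on $\Ctj(s,\cdot)$, the $W^*$-bound on $\partial_s \Ctj$ from the proof of Theorem~\ref{selfsim-main theorem}, and the chain $\BV((0,M)) \hookrightarrow\hookrightarrow L^p((0,M)) \hookrightarrow W^*$. The only difference is packaging---the paper invokes Simon's version of the Aubin--Lions lemma directly, while you unfold that lemma into its standard Ehrling-plus-Arzel\`{a}--Ascoli proof; your anticipated obstacle with the embedding $L^p((0,M)) \hookrightarrow W^*$ is harmless, since on the bounded interval the weight is bounded and $W \hookrightarrow C^0 \hookrightarrow L^q$ for every $q$.
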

\begin{proof}
	Let $0 < M < \infty$ and $1 \leq p < \infty$. We use an improvement of the Aubin-Lions lemma due to Simon \cite[Cor. 4]{simon} with the spaces $\BV((0,M))$, $L^p((0,M))$ and $W^*$, where $W$ now denotes
	$W \coloneqq W^{2,2}((1+x)^2\text{ d}x, (0,M))$.
	The space $\BV((0,M)) $ embeds compactly into $L^p((0,M))$ for all $1 \leq p < \infty$ \cite[Cor. 3.49]{ambrosio-bv}, and the embedding $L^p((0,M)) \hookrightarrow W^*$ is continuous for all $1 \leq p < \infty$ since
	$W \subset C^1((0,M)) \subset L^q((0,M))$
	for all $1 \leq q \leq \infty$.
	
	For all $s > 0$ and $j \in \N$, it holds that $\Ctj(s, \cdot)$ is monotonically decreasing and bounded, so the family $\Ctj \subset L^\infty([a,T], \BV((0,M)))$ is bounded. Combined with the previously established uniform  bound in $s \in [a,T]$ on $\partial_s \Ctj(s,x) \subset W^*$, this completes the assumptions of the Aubin-Lions lemma. We obtain a function $\hat{C} \in C([a,T], L^p((0,M)))$ and a subsequence denoted again by $t_j$ such that
	$\Ctj \to \hat{C}$ in $C([a,T], L^p((0,M)))$. 
	
	To identify the limit $\hat{C}$, we use that strong convergence implies weak convergence and observe that for all $u \in L^{\frac{p}{p-1}}((0,M))$, respectively $u \in L^\infty((0,M))$ when $p = 1$, and $\zeta \in C_c^\infty((a,T))$ it holds
	\begin{equation*}
	\int_{a}^{T} \int_{0}^{M} \Ctj(s,x)u(x)\zeta(s) \text{ d}x \text{ d}s \to \int_{a}^{T} \int_{0}^{M} \frac{(\rho - \frac{1}{\kappa})}{s}e^{-\frac{x}{s}} u(x)\zeta(s) \text{ d}x \text{ d}s,
	\end{equation*}
	so as before by continuity of $\hat{C}$ we have for all $s \in [a,T]$ that $\hat{C}(s,x) = \frac{(\rho - \frac{1}{\kappa})}{s}e^{-\frac{x}{s}}$ almost everywhere.
	
	We thus obtain for all $1 \leq p < \infty$ and $M < \infty$ a subsequence $t_j$ such that $\Ctj \to \frac{(\rho - \frac{1}{\kappa})}{s}e^{-\frac{x}{s}}$ in $C([a,T], L^p((0,M)))$. By a diagonal argument, we obtain a single subsequence $t_j$ such that this convergence holds in $C([a,T], L^p((0,M)))$ for all $1 \leq p < \infty$ and $M < \infty$.
\end{proof}

Since the limits in Corollary \ref{selfsim-cor-s=1} and Theorem \ref{thm-sa-L^p_loc-convergence} are uniquely identified, we in fact have convergence for every sequence $t \to \infty$. 
Furthermore, as a consequence of convergence in $L^p_{loc}$ we also obtain the pointwise convergence
$t_jC(t_j, \lfloor t_j x \rfloor + 1) \to (\rho - \frac{1}{\kappa})e^{-x}$
almost everywhere along a subsequence.

\bibliographystyle{alpha}
\bibliography{paper}

\end{document}